\documentclass{amsart}

\usepackage{latexsym,amsmath,amsfonts,amscd,amssymb, amsthm}
\theoremstyle{plain}
\newtheorem{theorem}{Theorem}[section]
\newtheorem{lemma}{Lemma}[section]
\newtheorem{proposition}{Proposition}[section]
\newtheorem{corollary}{Corollary}[section]

\theoremstyle{definition}
\newtheorem{definition}{Definition}[section]
\newtheorem{example}{Example}[section]

\theoremstyle{remark}
\newtheorem{remark}{Remark}[section]

\title{Proper holomorphic maps from the unit disk to some unit ball}

\author{John P. D'Angelo}

\address{Dept. of Mathematics, Univ. of Illinois, 1409 W. Green St., Urbana IL 61801}

\email{jpda@math.uiuc.edu}

\author{Zhenghui Huo}

\address{Dept. of Mathematics, Univ. of Illinois, 1409 W. Green St., Urbana IL 61801}

\email{huo3@illinois.edu}

\author{Ming Xiao}

\address{Dept. of Mathematics, Univ. of Illinois, 1409 W. Green St., Urbana IL 61801}

\email{mingxiao@illinois.edu}

\begin{document}

\begin{abstract} We study proper rational maps from the unit disk to balls in higher dimensions.
After gathering some known results, we study the moduli space of unitary equivalence classes
of polynomial proper maps from the disk to a ball, and we establish a normal form for 
these equivalence classes. We also prove
that all rational proper maps from the disk to a ball are homotopic in target dimension at least $2$.

{\bf AMS Classification Numbers}: 32H35, 30J99, 51F25, 32M99.

{\bf Keywords}: proper holomorphic mappings, unit disk, unit ball, unitary equivalence, homotopy equivalence.

\end{abstract}

\maketitle

\section{Introduction}

There is a vast literature on proper holomorphic mappings between balls in possibly different dimensional complex Euclidean spaces.
See for example [D1], [D2], [F], [H], [HJ], [JZ] and their references. See also [BEH], [EHZ], [L] and their references for related work
when the target hypersurface is a hyperquadric rather than a sphere.
Many of these results assume the domain dimension is at least two. In this paper we gather together various facts
when the domain is one-dimensional and prove several new results in this setting. In particular we compute the moduli space
of unitary equivalence classes of polynomial maps taking the unit circle to some unit sphere and we find a normal form.
See Theorem 4.1. To do so we analyze what we call upper-trace identities. We also give a new result on homotopy classes
in target dimension at least $2$.

Section 2 on singularities gathers together some known results when the source dimension is $1$.
Section 3 on equivalence relations helps clarify three notions arising in the study of proper mappings between balls: spherical equivalence, unitary equivalence, and homotopy equivalence. Section 4 includes the new results on unitary equivalence
for polynomial maps sending the unit circle in some unit sphere. Section 5 includes a proof that, 
when the target dimension is at least $2$, there is but one homotopy
equivalence class for rational proper maps from the disk to a ball.

All three authors thank the referee for noting several improvements and corrections.
The first author acknowledges support from NSF Grant DMS 13-61001.

\section{singularities}

In this section we consider singularities for
proper holomorphic maps from the unit disc $\Delta$ in $\mathbb{C}$ to the unit ball $\mathbb{B}_N$ in 
$\mathbb{C}^N$ for $N \ge 2$. The case when the target is one-dimensional is classical; such maps are precisely
the finite Blaschke products. Such maps are of course rational and extend holomorphically past the unit circle.
It is known ([CS]) that rational proper holomorphic maps from $\mathbb{B}_n$ to some $\mathbb{B}_N$ also extend
holomorphically past the sphere. Furthermore, if $n\ge 2$, and a proper holomorphic map $f:\mathbb{B}_n \to \mathbb{B}_N$ 
extends past the sphere, then it is rational ([F]). It is worth observing the following result; when the domain is the unit disk
and the target ball is higher dimensional, there are proper maps that extend past the circle but that are not rational.

\begin{proposition} For each $N\ge 2$, there is a non-algebraic holomorphic proper map from the open unit
disk $\Delta$ to $\mathbb{B}_N$ that extends holomorphically to a neighborhood of the closed unit disk.
\end{proposition}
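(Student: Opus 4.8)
The plan is to reduce the statement to the case $N=2$ and then build the two components of the map by a spectral (analytic) factorization. First I would observe that it suffices to produce a non-algebraic proper map $f=(f_1,f_2)\colon\Delta\to\mathbb{B}_2$ that extends holomorphically past the circle: for $N>2$ one simply composes with the isometric linear embedding $\mathbb{B}_2\hookrightarrow\mathbb{B}_N$, $(w_1,w_2)\mapsto(w_1,w_2,0,\dots,0)$, which preserves the norm, hence preserves both properness and the holomorphic extension, and keeps the map non-algebraic. Thus the whole problem is to find two functions $f_1,f_2$, holomorphic in a neighborhood of $\overline{\Delta}$, with $|f_1(z)|^2+|f_2(z)|^2=1$ on $|z|=1$ and with at least one component transcendental.

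For the first component I would take a transcendental entire function scaled to be small, e.g. $f_1(z)=\tfrac14 e^{z}$, so that $\sup_{\overline{\Delta}}|f_1|<1$; in particular $|f_1|<1$ on the circle. The point is then to solve $|f_2|^2=1-|f_1|^2$ on the circle with $f_2$ still holomorphic past the circle. Writing $\overline{f_1}$ for the function obtained by conjugating the Taylor coefficients of $f_1$, set $W(z)=1-f_1(z)\,\overline{f_1}(1/z)$. This $W$ is holomorphic in an annulus about the unit circle, satisfies the para-Hermitian symmetry $\overline{W(1/\bar z)}=W(z)$, and equals $1-|f_1|^2$, a positive real number, on the circle. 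Because $W$ is real and positive on $|z|=1$, it is zero-free and has winding number zero there, so it is zero-free in a thin annulus and admits a single-valued holomorphic logarithm $L=\log W$ on that annulus. Splitting the Laurent expansion $L(z)=\sum_n\ell_n z^n$ (where the symmetry forces $\ell_0\in\mathbb{R}$ and $\ell_{-n}=\overline{\ell_n}$) and setting
\[
f_2(z)=\exp\Bigl(\tfrac12\ell_0+\sum_{n\ge1}\ell_n z^n\Bigr),
\]
produces a function holomorphic in a disk of radius $>1$ with $f_2(z)\,\overline{f_2}(1/z)=e^{L(z)}=W(z)$, i.e. $|f_2|^2=1-|f_1|^2$ on the circle. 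I expect the main difficulty to be exactly this factorization, namely guaranteeing that the square-root factor $f_2$ is genuinely holomorphic past the circle and not merely an $H^2$ outer function; it works precisely because $f_1$ was chosen to keep $W$ real-analytic and bounded away from $0$ on the circle.

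Finally I would check that $f=(f_1,f_2)$ is proper and non-algebraic. By construction $\|f\|^2=1$ on $|z|=1$; since $\|f\|^2$ is subharmonic and $f$ is nonconstant, the maximum principle gives $\|f\|^2<1$ on $\Delta$, and continuity gives $\|f(z)\|\to1$ as $z\to\partial\Delta$, so $f$ is a proper holomorphic map $\Delta\to\mathbb{B}_2$ extending holomorphically past the circle. It is non-algebraic because its component $f_1=\tfrac14 e^{z}$ is transcendental. Padding with zeros as above then yields such a map into $\mathbb{B}_N$ for every $N\ge2$.
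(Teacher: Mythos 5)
Your proposal is correct and follows the same overall strategy as the paper: take a transcendental first component with modulus strictly less than $1$ on the closed disk (the paper uses $e^{z-2}$, you use $\tfrac14 e^z$), and complete it to a sphere map by an exponential second component whose modulus on the circle is $\sqrt{1-|f_1|^2}$. The one genuine difference is the mechanism producing that second component. The paper sets $\phi=\tfrac12\log(1-|f_1|^2)$ on the circle, solves the Dirichlet problem to get a harmonic $u$ with boundary values $\phi$, uses real-analyticity of $\phi$ to extend $u$ past the circle, and exponentiates $u+iv$ with $v$ a harmonic conjugate. You instead factor $W(z)=1-f_1(z)\overline{f_1}(1/z)$ directly in an annulus about the circle: $W$ is zero-free and positive there, hence has a single-valued logarithm with Laurent coefficients satisfying $\ell_{-n}=\overline{\ell_n}$, and exponentiating the analytic half of that series gives $f_2$. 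The two constructions yield the same function (the analytic projection of $\log W$ is exactly $2(u+iv)$), but yours is somewhat more self-contained, replacing the appeal to the fact that the harmonic extension of real-analytic boundary data extends past the boundary with an elementary Laurent-series splitting. You are also more explicit than the paper on two minor points: the maximum-principle argument showing the map actually sends $\Delta$ into the ball and is proper, and the padding by zeros that handles general $N\ge 2$.
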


\begin{proof}
Write $f_{1}(z)=e^{z-2}.$ Then $|f_1|=e^{x-2} < 1$ on the unit circle. 
Let $\phi(x)$ be the real analytic function 
in a neighborhood of $\overline{\Delta}$ 
such that $e^{2\phi(x)}+e^{2(x-2)}=1$ on the unit circle. 
Let $u(z)$ be the harmonic function  with boundary value $\phi$. 
Since $\phi$ is real analytic in a neighborhood $D$ of $\overline{\Delta}$, also $u$ is real-analytic there. Let
$v$ be the harmonic conjugate of $u$ in $D$ and put $h=u+iv$. Then $h$ and thus $f_2=e^{h}$ is holomorphic in $D$. Put $F=(f_1,f_2)$.
On the circle,
$$ ||F||^2 = |f_1|^2 + |f_2|^2 =  e^{2(x-2)} + e^{2\phi} = 1$$
and hence $F$ maps the unit circle to the unit sphere. Also, $F$ is not algebraic.
\end{proof}

We next observe the following simple fact about singularities of rational proper maps from
$\Delta$ to $\mathbb{B}_N$. 
\begin{proposition}
Let $F=\frac{(p_{1},..., p_{N})}{q}$ be a holomorphic rational proper map from $\Delta$ to $\mathbb{B}_N$. Then $F$ extends holomorphically across the unit circle.
\end{proposition}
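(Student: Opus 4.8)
The plan is to reduce $F$ to lowest terms and then show that the denominator $q$ cannot vanish anywhere on the closed disk $\overline{\Delta}$. First I would cancel the greatest common divisor of $p_1,\dots,p_N,q$, so that without loss of generality these polynomials have no common factor; this does not change $F$ as a map. Since $F$ is holomorphic on $\Delta$, the polynomial $q$ has no zeros in $\Delta$, so the entire issue is to rule out zeros of $q$ on the unit circle. Once this is done, $q$ is zero-free on the compact set $\overline{\Delta}$, hence zero-free on a neighborhood of it, and $F=(p_1,\dots,p_N)/q$ is holomorphic there.

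The main tool is the boundary identity furnished by properness. Because $F$ maps $\Delta$ properly into $\mathbb{B}_N$, one has $\|F(z)\|\to 1$ as $|z|\to 1$; at any point $z_0$ of the circle with $q(z_0)\ne 0$, where $F$ is continuous, this gives $\sum_{j=1}^N |p_j(z_0)|^2 = |q(z_0)|^2$. The function $\sum_j |p_j|^2 - |q|^2$ is a polynomial in $z$ and $\bar z$, hence continuous, so its vanishing off the finitely many circle-zeros of $q$ forces it to vanish on the entire circle: $\sum_{j=1}^N |p_j(z)|^2 = |q(z)|^2$ for all $|z|=1$.

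Next I would convert this into a genuine polynomial identity. On the circle $\bar z = 1/z$, so writing $\tilde P(z) = z^d\,\overline{P(1/\bar z)}$ for the degree-$d$ reflection of a polynomial $P$, where $d$ bounds all the degrees in play, one has $\overline{P(z)} = z^{-d}\tilde P(z)$ whenever $|z|=1$. Substituting into the boundary identity and clearing the common factor $z^{-d}$ yields $\sum_{j=1}^N p_j(z)\,\tilde p_j(z) = q(z)\,\tilde q(z)$ on the circle; since two polynomials agreeing at infinitely many points coincide, this holds identically on $\mathbb{C}$.

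The final step is a reflection argument. Suppose $q(z_0)=0$ with $|z_0|=1$. Then the right-hand side of the identity vanishes at $z_0$, so $\sum_j p_j(z_0)\,\tilde p_j(z_0)=0$. Because $|z_0|=1$ gives $1/\bar z_0 = z_0$, I have $\tilde p_j(z_0)=z_0^d\,\overline{p_j(z_0)}$, whence $\sum_j p_j(z_0)\,\tilde p_j(z_0)=z_0^d\sum_j |p_j(z_0)|^2$. As $z_0^d\ne 0$, this forces $p_j(z_0)=0$ for every $j$; but then $z-z_0$ divides $q$ and every $p_j$, contradicting that the representation is in lowest terms. Hence $q$ has no zeros on the circle, completing the argument. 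I expect the only delicate point to be the passage from properness to the pointwise boundary identity — in particular justifying that $\sum_j |p_j|^2 - |q|^2$ vanishes on all of the circle rather than merely off the zeros of $q$ — after which everything reduces to elementary algebra.
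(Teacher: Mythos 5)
Your proof is correct, and its core is the same as the paper's: a zero of $q$ on the circle forces every $p_j$ to vanish there, contradicting the lowest-terms assumption. The difference is that you reach this through a detour — first upgrading the boundary relation $\sum_j |p_j|^2 = |q|^2$ to the polynomial identity $\sum_j p_j \tilde p_j = q\tilde q$ via reflection, then evaluating at $z_0$ — whereas the paper argues directly: properness gives $\|F(z)\|^2 \to 1$ as $z \to z_0$, so $\|p(z)\|^2 = \|F(z)\|^2\,|q(z)|^2 \to 1\cdot|q(z_0)|^2 = 0$, hence $p_j(z_0)=0$ for all $j$. This one-line limit argument also disposes of the point you flagged as delicate (extending the boundary identity across the circle-zeros of $q$), since it works at the zero $z_0$ itself without any continuity extension. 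Your reflection identity is sound and is in fact the mechanism behind the upper-trace identities used later in the paper, but for this proposition it is more machinery than the statement requires.
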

\begin{proof} We may assume that $F$ is reduced to lowest terms. Suppose $q(z_0) = 0$ for some $z_0$ on the circle. Since $q$ is a polynomial, it is divisible by $(z-z_0)$.
Since $F$ is proper, $||F(z)||^2$ tends to $1$ as $z$ tends to $z_0$. Therefore $||p(z)||^2$ tends to $|q(z_0)|^2 = 0$
as $z$ tends to $z_0$. Hence each component of $p$ is also divisible by $(z-z_0)$, and $F$ is not reduced to lowest terms.
Thus $q$ does not vanish on the circle and the conclusion follows.
 \end{proof}

Let $z \in {\mathbb C}^n$. We use the term {\bf rational sphere map} for a rational function $z \mapsto {p(z) \over q(z)}$
that maps the unit sphere in the source to the unit sphere in the target.
We assume without loss of generality that $p$ and $q$ have no common factor and, when $n\ge 2$, that $q(0)=1$.
Each rational proper map between balls defines a rational sphere map; for $n\ge 2$ the only other examples 
are constants. When $n=1$, however, rational sphere maps also include maps that have singularities
in the unit disk. In particular the denominator can have a power of $z$ as a factor.
The following elementary result indicates that the denominator can vanish at points inside the disk; the $a_k$
in this lemma can be anywhere except on the circle itself.

\begin{lemma} Suppose ${p \over q}$ is a rational function mapping the unit circle
to the unit sphere in some ${\mathbb C}^N$, and ${p\over q}$ is reduced to lowest terms.
Then the denominator $q$ can be written
$$ q(z) = c z^m \prod_{k=1}^K (1 - {\overline a_k}z),  \eqno (1) $$
where $c$ is a constant, $m\ge 0$, and $|a_k| \ne 1$ for each $k$. 
\end{lemma}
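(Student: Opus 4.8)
The plan is to reduce the statement to two facts: that the sphere condition forces the polynomial identity $\|p(z)\|^2 = |q(z)|^2$ to hold everywhere on the circle, and that once $q$ has no zeros on the circle the desired factorization is automatic. First I would record the basic identity. Since $p/q$ sends the circle to the unit sphere, at every point of the circle away from the (finitely many) zeros of $q$ we have $\|p(z)/q(z)\|^2 = 1$, i.e. $\sum_{j=1}^N |p_j(z)|^2 = |q(z)|^2$. Both sides are restrictions to the circle of functions that are continuous on all of $\mathbb{C}$, and they agree on a dense subset of the circle; hence by continuity the identity $\sum_{j=1}^N |p_j(z)|^2 = |q(z)|^2$ holds at \emph{every} point of the unit circle.

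Next I would rule out zeros of $q$ on the circle, by the same argument used in the preceding Proposition. Suppose $|z_0| = 1$ and $q(z_0) = 0$. Evaluating the extended identity at $z_0$ gives $\sum_{j=1}^N |p_j(z_0)|^2 = |q(z_0)|^2 = 0$, so $p_j(z_0) = 0$ for every $j$. Then $(z - z_0)$ divides $q$ and each $p_j$, contradicting the hypothesis that $p/q$ is reduced to lowest terms. Therefore $q$ has no zero on the circle.

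The remaining step is purely algebraic. I would factor $q$ over $\mathbb{C}$ as $q(z) = c_0\, z^m \prod_{k=1}^K (z - r_k)$, pulling out the zero at the origin (of multiplicity $m \ge 0$) and listing the remaining nonzero roots $r_k$. For each such root write $z - r_k = -r_k(1 - r_k^{-1} z)$ and set $\overline{a_k} = r_k^{-1}$; absorbing the scalars $-r_k$ and $c_0$ into a single constant $c$ yields the displayed form $(1)$. Since $|a_k| = |r_k|^{-1}$ and, by the previous step, no root $r_k$ lies on the circle, we conclude $|a_k| \ne 1$ for each $k$.

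I expect the only point requiring genuine care to be the passage from the pointwise relation on the circle, which is valid a priori only where $q \ne 0$, to the global identity on the whole circle; the density-plus-continuity observation is what makes this legitimate. Once that is in place, the no-zeros claim and the factorization follow mechanically, with no constraint on the $a_k$ beyond lying off the circle.
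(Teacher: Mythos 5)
Your proof is correct. The paper states this lemma without proof (calling it elementary), so the natural comparison is with the paper's proof of the preceding Proposition, which handles the proper-map case: there the authors use properness to get $\|p(z)\|^2 \to |q(z_0)|^2 = 0$ as $z \to z_0$ from inside the disk, whereas you replace that step with the density-plus-continuity extension of the identity $\|p(z)\|^2 = |q(z)|^2$ along the circle itself. That substitution is exactly the right adaptation, since a rational sphere map in one variable may have poles inside the disk and need not be proper, so the interior limit argument is unavailable; the rest (no zeros of $q$ on the circle, then the algebraic rewriting of each nonzero root $r_k$ as $1 - \overline{a_k}z$ with $|a_k| = |r_k|^{-1} \ne 1$) is routine and you carry it out correctly.
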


\section{Equivalence relations for proper mappings}

We introduce various equivalence relations  for proper holomorphic mappings $f:{\mathbb B}_n \to {\mathbb B}_N$ between balls.
We first mention a convenient notation:
we often write maps of the form $(f,0)$ as $f \oplus 0$. Here we have not specified the number of zero components.

Because the automorphism group of the unit ball (in each dimension) is transitive (and hence large),
it is natural to consider {\it spherical equivalence} of proper maps. 

\begin{definition} Proper mappings $f,g$ from ${\mathbb B}_n$ to ${\mathbb B}_N$ are {\it spherically equivalent}
if there are automorphisms $\psi$ of the target ball and $\chi$ of the domain ball 
such that $f= \psi \circ g \circ \chi$. Proper maps are {\it unitarily equivalent}
if both of these automorphisms can be chosen to be unitary maps. \end{definition}

The unitary group ${\bf U}(n)$ is a subgroup of the automorphism group. It is natural in some problems 
to consider unitary equivalence rather
than spherical equivalence. 
Homotopy equivalence [DL1] also plays a crucial role. See also [DL2].

\begin{definition} Let $f:{\mathbb B}_n \to {\mathbb B_{N_f}}$  and $g:{\mathbb B}_n \to {\mathbb B_{N_g}}$
be proper holomorphic maps. We say $f$ and $g$ are {\it homotopic in target dimension} $N$
if there is a one-parameter family of proper maps $H_t:{\mathbb B}_n \to {\mathbb B}_N$ such that
$H_0 = f \oplus 0$ and $H_1 = g \oplus 0$. We assume the map $(z,t) \to H_t(z)$ is continuous.
\end{definition}

\begin{example}\label{rmsp}
Spherical equivalence implies homotopy equivalence. See [DL1] for the simple proof,
which relies on the explicit form of the automorphisms and the path-connectedness of
the unitary group. In one dimension we deform the map $e^{i\theta} {z-a \over 1 - {\overline a} z}$
to the identity by replacing $a$ with $(1-t)a$ and $\theta$ 
with $(1-t)\theta$. In Theorem 5.1 we prove 
that a holomorphic rational proper map from the disk to a ball and of
degree one is homotopic to $z \oplus 0$. 
\end{example}

We can also consider homotopy equivalence for rational sphere maps. For $n\ge 2$, we are including constant maps
as well as proper maps between balls. When $n=1$, by Lemma 2.1, we are also including functions with poles inside the unit disk.
For rational sphere maps that are holomorphic on the ball, thus in particular when $n\ge 2$,
the Taylor coefficients of a homotopy depend continuously on $t$.
When $n=1$, rational sphere maps with a pole at $0$ do not have a Taylor expansion at $0$.

The next lemma indicates how homotopy depends upon the target dimension.

\begin{lemma}If $f,g : {\mathbb B}_n \to {\mathbb B_{N}}$ are proper maps,
then $f\oplus 0$ and $g \oplus 0$ are homotopic in target dimension $N+n$. 
If $n \ge 2$ and $f,g$ are rational sphere maps, then 
 $f\oplus 0$ and $g \oplus 0$ are homotopic in dimension $n+1$. \end{lemma}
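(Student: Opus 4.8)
The plan is to connect both maps to one common ``landmark'' map by explicit one-parameter families, and then to glue, using (as in Example 3.1) that unitarily equivalent maps are homotopic. For the first assertion I would take the landmark to be $0\oplus\mathrm{id}$ in dimension $N+n$, where $\mathrm{id}(z)=z$ is the linear embedding of $\mathbb{B}_n$. Define
$$ H_t(z)=\Bigl(\cos\bigl(\tfrac{\pi t}{2}\bigr)\,f(z),\ \sin\bigl(\tfrac{\pi t}{2}\bigr)\,z\Bigr)\in\mathbb{C}^N\oplus\mathbb{C}^n=\mathbb{C}^{N+n}. $$
Since $f$ is proper, $\|f(z)\|^2\to1$ as $z\to\partial\mathbb{B}_n$, and $\|z\|^2\to1$ there as well; hence $\|H_t(z)\|^2=\cos^2\bigl(\tfrac{\pi t}{2}\bigr)\|f(z)\|^2+\sin^2\bigl(\tfrac{\pi t}{2}\bigr)\|z\|^2$ tends to $1$ at the boundary and is strictly less than $1$ inside. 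Thus each $H_t$ is a proper map into $\mathbb{B}_{N+n}$, the map $(z,t)\mapsto H_t(z)$ is continuous, $H_0=f\oplus0$, and $H_1=0\oplus\mathrm{id}$. Running the same construction for $g$ and concatenating proves the first statement; the $n$ extra target dimensions are precisely those occupied by $\mathrm{id}$, and no real difficulty arises.

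For the second assertion the improvement comes from replacing the landmark by one that costs a single extra target dimension rather than $n$. The key point, available only because $n\ge2$, is that constant maps of norm one are themselves rational sphere maps; they are therefore admissible both as endpoints and as intermediate stages of a homotopy. I would aim to deform each of $f$ and $g$, through rational sphere maps, to a constant, and then observe that any two unit constants are unitarily equivalent, hence homotopic by Example 3.1. One economical route is to precompose with a degenerating family of automorphisms: let $\chi_t$ be an automorphism of $\mathbb{B}_n$ with $\chi_0=\mathrm{id}$ whose center tends to a boundary point $b$ as $t\to1$. Each $f\circ\chi_t$ is a rational sphere map into the same target ball, and as $t\to1$ the automorphisms $\chi_t$ converge locally uniformly on $\mathbb{B}_n$ to the constant $b$, so $f\circ\chi_t$ converges to the (constant) value of $f$ at $b$; here one uses that for $n\ge2$ the map $f$ extends continuously past the sphere by [CS].

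The main obstacle is exactly the dimension count $n+1$. The naive rotation used in the first part reaches a constant only after adjoining one coordinate, giving dimension $N+1$ rather than $n+1$; to obtain $n+1$ one must carry out the reduction \emph{within} the target, which is where the degenerating-automorphism deformation (or an analogous dimension-economical deformation adapted to the rational structure) is needed. The delicate verification is that this family remains a genuine one-parameter family of rational sphere maps and that $(z,t)\mapsto H_t(z)$ stays continuous up to $t=1$, the instant at which properness is lost and the map collapses to a constant. Establishing this controlled degeneration, so that the entire deformation lives in target dimension $n+1$, is the crux; the hypothesis $n\ge2$ enters twice, ensuring both that constant maps are legitimate rational sphere maps and that $f$ extends past the sphere.
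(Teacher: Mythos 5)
Your proof of the first assertion is correct and is essentially the paper's argument: the paper uses the homotopy $(\sqrt{1-t^2}\,f, tz)$ from $f\oplus 0$ to $0\oplus z$ in dimension $N+n$, which differs from your $\bigl(\cos(\tfrac{\pi t}{2})f, \sin(\tfrac{\pi t}{2})z\bigr)$ only in parametrization, and then concatenates via transitivity.

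For the second assertion you have been led astray by what is evidently a typo in the statement: the conclusion should read target dimension $N+1$, not $n+1$. Indeed $f\oplus 0$ has at least $N$ components, so it cannot map into ${\mathbb B}_{n+1}$ at all when $N>n+1$, and the paper's own proof concludes ``homotopic in target dimension $N+1$.'' The intended argument is precisely the ``naive'' one you describe and then discard: since for $n\ge 2$ rational sphere maps include unit constants, one repeats the rotation of the first part with the identity $z$ replaced by the constant $1$, costing a single extra coordinate and landing in dimension $N+1$. As written, your proposal does not prove the second claim --- you explicitly leave ``the crux'' (the controlled degeneration in dimension $n+1$) unestablished, and that goal is unattainable for dimension-counting reasons. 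That said, your degenerating-automorphism idea is sound and, if carried through (precompose with automorphisms whose centers tend to a boundary point $b$; use that $f$ is continuous on the closed ball to get locally uniform convergence of $f\circ\chi_t$ to the unit constant $f(b)$, hence joint continuity of the homotopy; then connect any two unit constants by a unitary path), it proves something \emph{stronger} than the lemma: any two rational sphere maps are homotopic through rational sphere maps in the target dimension $N$ itself, with no extra coordinate. You should either complete that argument or fall back on the one-line constant-$1$ rotation; chasing $n+1$ leads nowhere.
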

\begin{proof} Let $z$ denote the identity map. Note that $f \oplus 0$ is homotopic to $0 \oplus z$ in target dimension $N+n$ 
via the homotopy
$(\sqrt{1-t^2}\  f, tz)$. The same applies to $g \oplus 0$. Since homotopy
equivalence is an equivalence relation, $f \oplus 0$ and $g \oplus 0$ are homotopic
in target dimension $N+n$. 

If we regard $f$ and $g$ as rational sphere maps, thus allowing constant maps, the same argument works with $z$ replaced
by the constant function $1$. Thus $f\oplus 0$ and $g\oplus 0$
are homotopic in target dimension $N+1$.
\end{proof}

\begin{remark} When the source dimension exceeds $1$, the degree of a polynomial or rational proper map is {\bf not}
a homotopy invariant. See [DL1] for an explicit one-parameter family $H_t$
of polynomial maps from ${\mathbb B}_2$ to ${\mathbb B}_5$ such that
the embedding dimension of each $H_t$ is $5$ and yet the degree 
is not constant in $t$. \end{remark}

\section{unitary equivalence}

We now assume that the source dimension is $1$.
Theorem 4.1 provides a complete analysis of the polynomial case.
Consider a polynomial $f:{\mathbb C} \to {\mathbb C}^N$ that maps the circle to the unit sphere.
Thus $||f(z)||^2 = 1$ on the circle. This condition leads to a system of linear equations
for the inner products of the coefficient vectors. 
Since $U$ is unitary if and only if $U$ preserves all inner products,
this system of equations takes into account unitary equivalence in the 
target space. The identities (3) and (4) below play a major role in this paper.
They are special cases of the upper-trace identities from Definition 4.2.

Let $f:{\mathbb C} \to {\mathbb C}^N$ be a polynomial of degree $d$. If the coefficient vectors are linearly independent,
then $N\ge d+1$. Without loss of generality we will assume that $N=d+1$ when we have a polynomial of degree $d$.

\begin{proposition} \label{tqsystem}
Let $f:{\mathbb C} \to {\mathbb C}^{N}$ be a polynomial of degree $d$. For $A_j \in {\mathbb C}^N$ put
$$f(z) = \sum_{j=0}^d A_j z^j. \eqno (2) $$
Then $f$ maps the unit circle to the unit sphere if and only if
the inner products $B_{jk} = \langle A_j, A_k \rangle$ of the coefficient vectors $A_j$ satisfy the following linear system:
$$ \sum_{j=0}^d B_{jj} = 1 \eqno (3) $$
$$ {\rm For} \ l \ne 0, \ \ \ \sum_{k=0}^{d-l} B_{(k+l)k} = 0. \eqno (4) 
$$\end{proposition}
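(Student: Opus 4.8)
The plan is to compute $\|f(z)\|^2$ directly on the unit circle and then read off the two stated identities as the vanishing of Fourier (Laurent) coefficients. First I would expand
$$\|f(z)\|^2 = \langle f(z), f(z)\rangle = \sum_{j=0}^d \sum_{k=0}^d \langle A_j, A_k\rangle\, z^j \overline{z}^{\,k} = \sum_{j,k} B_{jk}\, z^j \overline{z}^{\,k}.$$
On the circle $|z|=1$ we have $\overline{z} = 1/z$, so $z^j \overline{z}^{\,k} = z^{\,j-k}$. Grouping the terms according to the exponent $l = j-k$ turns the right-hand side into a single Laurent polynomial in $z$,
$$\|f(z)\|^2 = \sum_{l=-d}^{d} c_l\, z^l, \qquad c_l = \sum_{k} B_{(k+l)k},$$
where for each fixed $l$ the inner sum runs over those $k$ with $0 \le k \le d$ and $0 \le k+l \le d$; for $l \ge 0$ this is exactly $\sum_{k=0}^{d-l} B_{(k+l)k}$.

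The condition that $f$ sends the circle into the sphere is precisely $\|f(z)\|^2 = 1$ for every $z$ with $|z|=1$. So I would set the Laurent polynomial above equal to the constant function $1$ on the circle and argue coefficient by coefficient. The key observation is that the monomials $z \mapsto z^l$, for $-d \le l \le d$, are linearly independent as functions on the unit circle: after multiplying by $z^d$ a nonzero Laurent polynomial becomes a nonzero ordinary polynomial, which has only finitely many zeros and hence cannot vanish on the infinite set $\partial\Delta$. (Equivalently, the characters $e^{il\theta}$ are orthogonal in $L^2(\partial\Delta)$.) Consequently $\sum_l c_l z^l \equiv 1$ on the circle forces $c_0 = 1$ and $c_l = 0$ for all $l \neq 0$. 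The equation $c_0 = 1$ is identity (3), and the equations $c_l = 0$ for $l \neq 0$ are identity (4).

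Finally, I would record the Hermitian symmetry $B_{kj} = \overline{B_{jk}}$, from which a short index shift gives $c_{-l} = \overline{c_l}$. Thus the equations for $l < 0$ are just the complex conjugates of those for $l > 0$, and it suffices to impose (4) for $1 \le l \le d$, as written. Every step here is reversible: if the $B_{jk} = \langle A_j, A_k\rangle$ satisfy (3) and (4), then the Laurent expansion of $\|f(z)\|^2$ on the circle collapses to the constant $1$, so $f$ maps the circle to the sphere. The only point that requires any care — and the mild \emph{obstacle} of the argument — is the justification that agreement on the circle forces equality of the Laurent coefficients; once that linear-independence fact is in hand, the rest is bookkeeping of indices.
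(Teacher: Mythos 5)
Your proof is correct and follows essentially the same route as the paper: expand $\|f(z)\|^2$ as a Laurent polynomial in $z$ on the circle and equate Fourier coefficients with those of the constant function $1$. The extra details you supply (the linear independence of the characters $e^{il\theta}$ and the Hermitian symmetry $c_{-l}=\overline{c_l}$ reducing to $l\ge 0$) are implicit in the paper's one-line appeal to equating Fourier coefficients.
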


\begin{proof} On the circle we have the condition
$$ 1 = ||f(z)||^2 = ||\sum_{j=0}^d A_j z^j||^2 = \sum_{j,k} \langle A_j, 
A_k\rangle z^{j-k}.  \eqno (5) $$
Replace $z$ by $e^{i\theta}$ in (5) and equate Fourier coefficients. For each $l$ we get
$$  1 = \sum_k B_{(k+l)k} e^{il \theta}. $$
Equation (3) follows because the constant term equals $1$. Equation (4) follows 
from equating the coefficient of $e^{i l \theta}$ to $0$. 
\end{proof}

Note that a matrix of inner products is non-negative definite, and hence all of its principal minor determinants
are non-negative. We write $B(f)$ for the matrix of inner products of a polynomial sphere map $f$. 

\begin{definition} Let $B=(B_{jk})$ and $C=(C_{jk})$ be square Hermitian matrices of the same size.
We define an equivalence relation $ \sim $ by $B \sim C$ if there is a $\theta$ such that
$$ B_{j(j+m)} = e^{im\theta} C_{j(j+m)}  $$
for all $j$ and $m$. We say simply that $B$ and $C$ are *-equivalent. \end{definition}

Recall that $f$ and $g$ are unitarily equivalent if $f = U \circ g \circ u$ for some $u \in {\bf U}(1)$
and some $U \in {\bf U}(d+1)$.

\begin{corollary} Let $f$ and $g$ be proper polynomial maps 
from $\Delta$ to ${\mathbb B}_{d+1}$ of degree $d$.  Then
$f$ and $g$ are unitarily equivalent if and only if $B(f)$ is *-equivalent to $B(g)$.\end{corollary}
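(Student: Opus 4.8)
The plan is to track how the inner-product matrix $B$ transforms under the two pieces of a unitary equivalence — a source rotation $u\in\mathbf U(1)$ and a target unitary $U\in\mathbf U(d+1)$ — and to observe that the source rotation is exactly what realizes the phase factor $e^{im\theta}$ in the definition of $*$-equivalence, while the target unitary is exactly what accounts for equality of the $B$-matrices. Writing $f=\sum_j A_j z^j$ and $g=\sum_k C_k z^k$, I would first record the two elementary transformation rules: replacing $g$ by $U\circ g$ changes each $C_k$ to $UC_k$ and, since $U$ preserves inner products, leaves $B(g)$ unchanged; replacing $g$ by $g\circ u$ with $u(z)=e^{i\alpha}z$ changes $C_k$ to $e^{ik\alpha}C_k$, hence multiplies $B(g)_{j(j+m)}$ by $e^{-im\alpha}$.

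For the forward direction, suppose $f=U\circ g\circ u$ with $u(z)=e^{i\alpha}z$. Combining the two rules gives $A_k=e^{ik\alpha}UC_k$, so that
$$B(f)_{jk}=\langle A_j,A_k\rangle=e^{i(j-k)\alpha}\langle UC_j,UC_k\rangle=e^{i(j-k)\alpha}B(g)_{jk}.$$
Setting $k=j+m$ yields $B(f)_{j(j+m)}=e^{-im\alpha}B(g)_{j(j+m)}$, which is precisely $*$-equivalence with $\theta=-\alpha$. This half is just the computation.

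The substantive direction is the converse. Assume $B(f)\sim B(g)$, so $B(f)_{j(j+m)}=e^{im\theta}B(g)_{j(j+m)}$ for all $j,m$. First I would use the source-rotation rule to absorb $\theta$: replacing $g$ by $g'=g\circ u$ with $u(z)=e^{-i\theta}z$ produces a map unitarily equivalent to $g$ whose matrix satisfies $B(g')=B(f)$ exactly. It then suffices to show that two degree-$d$ polynomial maps into $\mathbb C^{d+1}$ with identical inner-product matrices differ by a target unitary. For this I would invoke the standard fact that a Gram matrix determines its vectors up to a unitary: define $T$ on $\mathrm{span}\{C'_k\}$ by $T(\sum_k c_kC'_k)=\sum_k c_kA_k$, check it is well defined and isometric via $\|\sum c_kC'_k\|^2=\sum_{j,k}c_j\overline{c_k}B(g')_{jk}=\sum_{j,k}c_j\overline{c_k}B(f)_{jk}=\|\sum c_kA_k\|^2$ (so a null combination of the $C'_k$ forces the same null combination of the $A_k$), and then extend $T$ isometrically over the orthogonal complements to a unitary $U\in\mathbf U(d+1)$ with $UC'_k=A_k$ for all $k$. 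Then $f=U\circ g'=U\circ g\circ u$, giving unitary equivalence.

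The main obstacle is this last realization lemma: the well-definedness of $T$ and the extension to a genuine element of $\mathbf U(d+1)$. Well-definedness relies on the norm computation above, which works precisely because $B(f)=B(g')$ forces equal nullspaces for the two families of coefficient vectors, with the non-negative definiteness of $B$ guaranteeing the consistency of these norms. The extension step uses that $\mathrm{span}\{A_k\}$ and $\mathrm{span}\{C'_k\}$ have the same dimension — namely the rank of the common matrix — so their orthogonal complements in $\mathbb C^{d+1}$ have equal dimension and $T$ can be completed to a full unitary. Notably, no linear independence of the coefficient vectors is needed.
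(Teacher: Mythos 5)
Your proof is correct and is exactly the argument the paper intends: the corollary is stated without proof there, the target unitary being handled by the Gram-matrix principle (``$U$ is unitary if and only if $U$ preserves all inner products'') and the source rotation $z\mapsto e^{i\alpha}z$ supplying precisely the phase $e^{im\theta}$ on the $m$-th off-diagonal in Definition 4.1. Your write-up supplies the realization lemma and the well-definedness and extension details that the paper leaves implicit.
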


\begin{corollary} The moduli space of unitarily equivalent polynomial sphere maps of degree $d$
from the circle to $S^{2d+1}$ is the quotient space $W/ \sim $, where $W$ is the collection
of non-negative $(d+1)$-by-$(d+1)$ Hermitian matrices satisfying (3) and (4) above,
and $ \sim $ is defined as in Definition 4.1. \end{corollary}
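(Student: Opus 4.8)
The plan is to exhibit the assignment $f \mapsto B(f)$ as the bijection realizing the moduli space as $W/\!\sim$. Writing $M$ for the collection of polynomial sphere maps of degree $d$ from the circle to $S^{2d+1}$, I would show that $f \mapsto B(f)$ induces a well-defined bijection between the set of unitary equivalence classes in $M$ and the quotient $W/\!\sim$. The argument splits into three parts: (i) the assignment lands in $W$; (ii) it descends to a well-defined injection on equivalence classes; and (iii) it is surjective onto $W/\!\sim$.

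For (i), given any $f\in M$ the matrix $B(f)$ is by definition the Gram matrix of the coefficient vectors $A_0,\dots,A_d$, hence Hermitian and non-negative definite. By Proposition \ref{tqsystem} its entries satisfy the linear relations (3) and (4), so $B(f)\in W$. For (ii), Corollary 4.1 already asserts precisely that $f$ and $g$ are unitarily equivalent if and only if $B(f)$ is \emph{*}-equivalent to $B(g)$; this is exactly the statement that $f\mapsto B(f)$ descends to a well-defined and injective map from unitary equivalence classes onto their images in $W/\!\sim$. Thus the only genuinely new content lies in surjectivity.

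The substance of the proof is therefore (iii): every $B\in W$ must arise as $B(f)$ for some $f\in M$. Here I would invoke the standard fact that a non-negative definite Hermitian matrix admits a Gram factorization. Writing $B=UDU^{*}$ with $U$ unitary and $D\ge 0$ diagonal, the rows of $UD^{1/2}$ furnish vectors $A_0,\dots,A_d\in\mathbb{C}^{d+1}$ with $\langle A_j,A_k\rangle=B_{jk}$. Defining $f(z)=\sum_{j=0}^{d}A_j z^{j}$, one has $B(f)=B$, so the relations (3) and (4) hold; by the converse direction of Proposition \ref{tqsystem} the map $f$ sends the circle into the unit sphere of $\mathbb{C}^{d+1}$, that is, into $S^{2d+1}$. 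Composing (i)--(iii) identifies the moduli space with $W/\!\sim$.

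The one point requiring care, and the main obstacle I anticipate, is matching the degree exactly. The map $f$ has degree $d$ precisely when $A_d\ne 0$, equivalently $B_{dd}=\|A_d\|^{2}>0$. Since $B$ is non-negative, $B_{dd}=0$ forces the entire last row and column of $B$ to vanish, so that locus corresponds to maps of lower degree; it should be excluded (or viewed as a degenerate boundary) if one insists on degree exactly $d$, whereas on the open set $B_{dd}>0$ the correspondence is an honest bijection onto degree-$d$ maps. I would record this caveat and otherwise assemble (i)--(iii) to conclude.
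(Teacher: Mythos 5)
Your proof is correct and is precisely the argument the paper leaves implicit: the corollary is stated without proof as an immediate consequence of Proposition 4.1 (membership of $B(f)$ in $W$) and the preceding corollary (unitary equivalence corresponds to *-equivalence), with surjectivity supplied by the standard Gram factorization exactly as you describe. Your caveat about the locus $B_{dd}=0$ corresponding to maps of degree strictly less than $d$ is a legitimate refinement of the statement as written.
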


There are ${d(d+1) \over 2}$ parameters, of which $d$ are real. 
If all the off-diagonal elements of $B$ are non-negative real numbers, then we can go no further using
the group ${\bf U}(1)$. 
Otherwise we can make a chosen non-zero off-diagonal element positive via a rotation in the source,
and thus we can make an additional parameter real.
The moduli space is a semi-algebraic real subset of ${\mathbb C}^{(d+1)^2}$.

We want to give normal forms for the unitary equivalence classes.
For small target dimension and/or low degree it is fairly easy to solve the linear system from Proposition 4.1.
After stating the general result,
we list the normal forms explicitly for $d\le 3$.

The idea is simple. We regard the coefficients as vectors in the target space and select an orthonormal basis
in which their matrix representation is as simple as possible. Proposition 4.1 shows that the condition
of mapping the circle to the sphere depends only on the inner products of these vectors, and hence this
matrix depends only upon unitary transformations in the target.

\begin{theorem} Let $f(z) = \sum_{j=0}^d A_j z^j$ be a polynomial sphere map of degree $d$. Without loss of generality
assume that $f: {\mathbb C} \to {\mathbb C}^{d+1}$. For each $d$, there is an orthonormal basis $e_0, ..., e_d$ 
of the target ${\mathbb C}^{d+1}$ in which
$$ A_j = \sum a_{jk}e_k $$
and these vectors (and hence $f$) are in the following  partial normal form
with respect to ${\bf U}(d+1)$: \end{theorem}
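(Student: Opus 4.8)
The plan is to lean on Corollary 4.1, which tells us that the unitary equivalence class of $f$ is recorded exactly by the inner product matrix $B(f)$, equivalently by the ordered tuple of coefficient vectors $(A_0,\dots,A_d)$ considered up to a single common unitary of the target $\mathbb{C}^{d+1}$. Producing a normal form therefore reduces to selecting, for each such tuple, a canonical orthonormal basis $e_0,\dots,e_d$ in which the coordinate matrix $(a_{jk})$ is as sparse as possible. The natural choice is the basis adapted to the flag $V_0\subseteq V_1\subseteq\cdots\subseteq V_d$, where $V_j=\operatorname{span}(A_0,\dots,A_j)$. Concretely, I would run Gram--Schmidt on the coefficient vectors in the order $A_0,A_1,\dots,A_d$; this yields an orthonormal basis with $A_j\in\operatorname{span}(e_0,\dots,e_j)$ for every $j$, so that $a_{jk}=0$ whenever $k>j$. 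In matrix terms this is just the $LQ$ (Cholesky-type) factorization of the matrix whose rows are the $A_j$, equivalently the factorization $B(f)=LL^{*}$ of the non-negative definite Hermitian matrix $B(f)$ with $L$ lower triangular; its existence is immediate, and the rows of $L$ are the coordinates of the $A_j$ in the new basis.

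With the triangular shape secured, the residual freedom inside $\mathbf{U}(d+1)$ is the group of diagonal unitaries, that is, an independent phase on each $e_k$. I would use these phases to rotate each $e_j$ so that the diagonal entry $a_{jj}$ becomes real and non-negative; this pins down the partial normal form with respect to $\mathbf{U}(d+1)$. To reach the explicit shape recorded below, I would then feed in the sphere conditions (3) and (4). Since $B=AA^{*}$ with $A$ lower triangular, each subdiagonal-sum identity in (4) becomes a polynomial relation among the $a_{jk}$; for instance the outermost identity $B_{d0}=0$ reads $a_{d0}\,\overline{a_{00}}=0$, so that $a_{d0}=0$ once $a_{00}>0$, and the remaining identities yield successive relations that, in the non-degenerate situation where all $a_{jj}>0$, force the extreme corner entries to vanish. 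Tracking these relations inward gives exactly the tables listed for $d\le 3$.

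The main obstacle is the degenerate case in which the coefficient vectors are linearly dependent, so that some pivot $a_{jj}$ vanishes; the map $f(z)=z^{d}e_{0}$ already shows this can happen for a genuine degree-$d$ sphere map into $\mathbb{C}^{d+1}$. When a pivot is zero, the corresponding basis direction $e_j$ is not determined by the flag, the diagonal phase normalization no longer removes all freedom, and the Cholesky factor $L$ ceases to be unique, so the triangular form is not yet canonical. I would deal with this by observing that the surviving unitary freedom acts only on the orthogonal complement of $\operatorname{span}(A_0,\dots,A_d)$, hence acts trivially on $f$ itself, and by imposing a secondary rule on the undetermined directions to fix a definite representative. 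Checking that this yields a well-defined normal form, and that it remains compatible with (3) and (4) in the presence of zero pivots, is the delicate point; the explicit low-degree computations are included precisely to exhibit the resulting shapes unambiguously.
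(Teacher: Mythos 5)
Your construction is internally sound, but it proves a different normal form from the one asserted. Running Gram--Schmidt on $A_0,\dots,A_d$ in order (equivalently, factoring $B(f)=LL^{*}$ with $L$ triangular) produces a basis in which the \emph{entire} coefficient matrix is triangular, so that $A_d$ generically retains $d$ nonzero components; the identities only force $a_{d0}=0$, not the rest of that column. The theorem instead demands $A_0=\|A_0\|e_0$ \emph{and} $A_d=\|A_d\|e_d$ simultaneously --- both extreme columns collapsed to a single entry --- which is achievable precisely because (4) with $l=d$ gives $A_0\perp A_d$; triangularity is then imposed only on the inner $(d-1)\times(d-1)$ block, while the first and last rows carry the free complex parameters and the pair $a_{(d-1)0}=-\alpha\|A_d\|$, $a_{1d}=\overline{\alpha}\|A_0\|$ encodes the identity $\langle A_0,A_{d-1}\rangle+\langle A_1,A_d\rangle=0$. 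Your claim that tracking the relations inward ``gives exactly the tables listed for $d\le 3$'' is therefore not correct: already for $d=2$ your form leaves $A_2$ with two generically nonzero components, whereas the stated form has $A_2=\|A_2\|e_2$ and concentrates the remaining data in the middle column. The two forms are of course unitarily equivalent representatives of the same orbit, but the statement to be proved is the specific bordered shape, and your basis does not realize it.

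The missing idea is the inductive mechanism that the bordered shape enables. After pinning $A_0$ to $e_0$ and $A_d$ to $e_d$ and declaring the remaining first- and last-row entries to be parameters, the identities (3)--(4) restrict to the inner matrix as \emph{upper-trace identities with general, nonzero right-hand sides} $\lambda_j$ expressed in terms of those parameters; since such identities depend only on inner products, they survive a further unitary that upper-triangularizes the inner block with non-negative diagonal, and one inducts from degree $d-1$ to degree $d$. This is exactly why Definition 4.2 allows arbitrary $\lambda_j$ rather than only the homogeneous sphere-map values $\lambda_0=1$, $\lambda_j=0$. By contrast, your concern about zero pivots is largely moot: the theorem claims only a \emph{partial} normal form (Remark 4.2 explicitly concedes non-uniqueness of the triangularization), and the existence of a triangularizing orthonormal basis does not require the coefficient vectors to be independent.
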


\begin{itemize} 
\item $A_0 = ||A_0|| e_0$ and $A_d = ||A_d|| e_d$. 

\item $a_{(d-1)0} = -\alpha ||A_d||$ for some complex parameter $\alpha$.

\item $a_{1d} = {\overline \alpha} ||A_0||$.

\item The other entries in the first and last row are complex parameters.

\item The {\bf inner matrix} $a_{jk}$ for $1 \le j,k \le d-1$ is upper-triangular.

\item The first $d-2$ diagonal elements of the inner matrix are non-negative real parameters; the last diagonal
entry is real, non-negative and determined. 
\end{itemize}

\begin{remark} There is no unique way to put the inner matrix into upper-triangular form, and hence our partial normal form is not unique. For small $d$, in the next propositions we provide the extra information needed for uniqueness. \end{remark}

Let ${\mathcal A}$ denote the matrix with column vectors $A_j$ in this basis.
Then ${\mathcal A}$ satisfies all the identities in (4).
In particular, ${\rm trace} ({\mathcal A}^* {\mathcal A}) = 1$. Hence the parameters
in Theorem 4.1 are constrained by various inequalities. For example, Proposition 4.4 below
gives precise ranges of values for these parameters in case $f$ is degree $2$.

\begin{proposition} Let $f$ be as in Theorem 4.1. Consider
the $(d+1)$ by $(d+1)$ normal form matrix ${\mathcal A}$ from Theorem 4.1. For $0\le d\le 3$
these matrices have the following explicit forms:

\begin{itemize}
\item When $d=0$, the normal form is $(1) = \left( ||A_0|| \right) $. (no parameters)
\item When $d=1$, the normal form is
$$ \begin{pmatrix} ||A_0|| & 0 \cr 0 & ||A_1|| \end{pmatrix} $$
where $||A_0||^2 + ||A_1||^2 = 1$. (one real parameter, either $||A_0||$ or $||A_1||$.)
\item When $d=2$ the normal form is 
$$ \begin{pmatrix} ||A_0|| & -\alpha ||A_2|| & 0  \cr 0 & a_{11} & 0 \cr 0 & {\overline \alpha} ||A_0|| & ||A_2|| \end{pmatrix}. $$
Here $\alpha$ is a complex parameter, $a_{11}$ is real and the relation (6) holds:
$$ \left(||A_0||^2 + ||A_2||^2 \right) \ \left(1 + |\alpha|^2\right) + |a_{11}|^2 = 1. \eqno (6) $$
Both $||A_0||$ and $||A_2||$ are real parameters and $\alpha$ is a complex parameter. (Using a source unitary we can make $\alpha$ real if we wish.) 
\item When $d=3$ the normal form is 
$$ \begin{pmatrix} ||A_0|| & a_{10} & -\alpha ||A_3|| & 0  \cr 0 & a_{11} & a_{21} & 0 \cr 0 & 0 & a_{22} & 0 \cr 0 & {\overline \alpha} ||A_0|| & a_{23} & ||A_3|| \end{pmatrix}. $$
Here $\alpha$, $a_{10}$, and $a_{23}$ are complex parameters, $||A_0||$, $||A_3||$, and $a_{11}$ are real parameters,
$a_{22}$ is determined and is real, 
and two equations hold:
$$ ||A_0|| {\overline {a_{10}}} - {\overline \alpha} a_{10} ||A_3|| + 
a_{11} {\overline {a_{21}}} + {\overline \alpha} ||A_0|| {\overline {a_{23}}} + a_{23} ||A_3|| = 0 \eqno (7.1) $$
$$ {\rm trace} ({\mathcal A}^* {\mathcal A}) = \sum |a_{jk}|^2 = 1.  \eqno (7.2) $$
If $a_{11} \ne 0$, then (7.1) determines $a_{21}$ and (7.2) determines $a_{22}$ (given that it is real and non-negative). Note that the sum in (7.2) includes
all the terms (such as $a_{00}= ||A_0||$ and $a_{20} = -\alpha ||A_3||$, and so on). If $a_{11}=0$, then we set $a_{21}=0$
and again $a_{22}$ is determined.
\end{itemize}\end{proposition}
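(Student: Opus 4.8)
The plan is to specialize Theorem 4.1 to each of the degrees $d=0,1,2,3$ and then extract the stated scalar relations by feeding the normal form back into the linear system of Proposition 4.1. Theorem 4.1 already pins down the \emph{shape} of the matrix $\mathcal A$: the columns $A_0$ and $A_d$ equal $\|A_0\|e_0$ and $\|A_d\|e_d$, the entries $a_{(d-1)0}$ and $a_{1d}$ are forced to be $-\alpha\|A_d\|$ and $\overline\alpha\|A_0\|$, the inner block $(a_{jk})_{1\le j,k\le d-1}$ is upper triangular, and all but the last of its diagonal entries are free non-negative reals. Hence writing down the displayed $(d+1)\times(d+1)$ matrices is immediate; the actual content is to convert identities (3) and (4) into the relations (6), (7.1), (7.2), to confirm the indicated parameter counts, and to justify that the quantities called ``determined'' are indeed determined and can be taken real and non-negative.

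First I would substitute $f=\sum_j A_j z^j$ in its normal form into each off-diagonal identity (4), recording for $l\in\{1,\dots,d\}$ the equation $\sum_k B_{(k+l)k}=0$. The key observation is that the relations with $l\ge 2$ hold \emph{identically}, solely because of the structure imposed by Theorem 4.1: since $A_0$ and $A_d$ carry only their $e_0$- and $e_d$-components, every inner product occurring in the $l=d$ and $l=d-1$ relations collapses to a product of the pure norms $\|A_0\|,\|A_d\|$ with the entries $a_{(d-1)0}$ and $a_{1d}$, and the choices $a_{(d-1)0}=-\alpha\|A_d\|$, $a_{1d}=\overline\alpha\|A_0\|$ are precisely what cancels these terms in conjugate pairs. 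For $d\le 2$ the same cancellation also kills the $l=1$ relation, so the only surviving constraint is the trace identity (3); writing $\|A_1\|^2=|\alpha|^2(\|A_0\|^2+\|A_2\|^2)+|a_{11}|^2$ and summing the diagonal norms yields exactly (6). For $d=3$ the first and last rows now carry genuine extra complex parameters $a_{10}$ and $a_{23}$, so the $l=1$ relation no longer cancels; computing $B_{10}+B_{21}+B_{32}=0$ and taking its complex conjugate (the $l$ and $-l$ relations are conjugates, hence equivalent) gives precisely (7.1), while (3) reads ${\rm trace}(\mathcal A^*\mathcal A)=\sum|a_{jk}|^2=1$, which is (7.2).

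It then remains to verify the determinacy claims for $d=3$. When $a_{11}\ne 0$, the term $a_{11}\overline{a_{21}}$ in (7.1) lets me solve uniquely for $a_{21}$ in terms of the remaining parameters, after which (7.2) fixes $|a_{22}|^2$; to conclude that $a_{22}$ may be taken real and non-negative I would invoke non-negative definiteness of $B=\mathcal A^*\mathcal A$, which makes $1-\sum_{(j,k)\ne(2,2)}|a_{jk}|^2$ a non-negative (principal-minor) quantity. The degenerate case $a_{11}=0$ is exactly where the upper-triangularization of the inner block is not forced and where the normal form fails to be unique; here I would normalize $a_{21}=0$, consistent with the remark following Theorem 4.1, and again read $a_{22}$ off from (7.2). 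The cases $d=0,1$ are trivial: $d=0$ forces $\|A_0\|=1$ with no parameters, and $d=1$ retains only the orthogonality $\langle A_1,A_0\rangle=0$ coming from $l=1$ together with $\|A_0\|^2+\|A_1\|^2=1$.

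The only real subtlety, and the step I would carry out most carefully, is the bookkeeping: correctly separating the off-diagonal identities that are automatically satisfied by the normal form from those that impose genuine constraints, and keeping straight the index conventions relating the abstract entries $a_{jk}$ to their positions in the displayed matrix $\mathcal A$. There is no deep obstruction, since for $d\le 3$ the system (3)--(4) is small enough to solve explicitly by hand; the value of the proposition lies in exhibiting these solutions and in confirming that the parameter counts and the reality and non-negativity assertions promised by Theorem 4.1 are borne out in each low-degree case.
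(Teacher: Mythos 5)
Your proposal is correct and follows essentially the same route as the paper: specialize the shape of $\mathcal A$ from Theorem 4.1, substitute into the system (3)--(4) to see that the $l\ge 2$ relations (and, for $d\le 2$, the $l=1$ relation) are absorbed by the $\alpha$-parametrization, leaving exactly (6) for $d=2$ and (7.1)--(7.2) for $d=3$, and finish with diagonal unitaries to make $a_{11}$ and $a_{22}$ real and non-negative. Your bookkeeping of which identities are automatic versus constraining is a slightly more explicit version of what the paper does, but it is not a different argument.
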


\begin{proof} The cases where $d=0$ and $d=1$ are immediate. When $d=2$, we have $A_0$ and $A_2$ are orthogonal and $A_2 \ne 0$.
We may therefore choose the first and third columns as claimed. By  (4) and (3) we also must have
$$ \langle A_0, A_1 \rangle + \langle A_1, A_2 \rangle = 0  \eqno (8.1) $$ 
$$ ||A_0||^2 + ||A_1||^2 + ||A_2||^2 = 1 \eqno (8.2) $$
Equation (8.1) becomes
$$ ||A_0|| {\overline {a_{10}}} + a_{12} ||A_3|| = 0 $$
which we solve in terms of $\alpha$. Then we satisfy (8.2) by choosing $|a_{11}|$ appropriately. Using a diagonal unitary map
with diagonal elements $1,e^{i\phi},1$ for appropriate $\phi$ we can make $a_{11}$ real and non-negative.

The case when $d=3$ is similar to the case when $d=2$ except that we must regard $a_{10}$ and $a_{23}$ as parameters.
We then use equations (4) to obtain (7.1) and (7.2). Using a diagonal unitary map we can make the parameter $a_{11}$ real and, after determining 
$|a_{22}|$, make $a_{22}$ real as well. The normal form follows. Note that there are six parameters, of which
$||A_0||, ||A_3||, a_{11}$ are real and $\alpha, a_{10}, a_{23}$ are complex. \end{proof}

Before proving Theorem 4.1, we discuss {\bf upper-trace identities}.

\begin{definition} Consider a matrix ${\mathcal V}$ whose column vectors are $V_0,...V_d$. 
We say that ${\mathcal V}$ satisfies {\it upper-trace identities} with parameter values $\lambda_j$ if

$$ \langle V_0, V_d \rangle = \lambda_d. $$

$$ \langle V_0, V_{d-1} \rangle + \langle V_1, V_d \rangle = \lambda_{d-1} $$

$$ \sum_j \langle V_j, V_{j+k} \rangle = \lambda_k $$

$$ \sum_j ||V_j||^2 = \lambda_0.  $$ \end{definition}

When $\lambda_0 =1$ and $\lambda_j=0$ otherwise, these identities are equations (3) and (4) from Proposition 4.1.
Their general version facilitates the proof of Theorem 4.1 as follows: 

\begin{proof} (of Theorem 4.1) The upper-trace identities are linear equations in the inner products, but quadratic in the entries
of the vectors after a basis has been chosen. By regarding some of the entries as parameters we obtain
linear equations in the remaining entries. Let $f$ be a polynomial of degree $d$ mapping
the unit circle to some unit sphere. Its coefficient vectors $A_j$ and an orthonormal basis determine
a matrix ${\mathcal A}$ with entries $a_{jk}$ for $0 \le j,k \le d$. Equation (3) is equivalent to
${\rm tr} \left ( {\mathcal A}^* {\mathcal A}\right)= 1$. When $d=2$ this equation is the same as (6) and when
$d=3$ it is the same as (7.2). We first find a unitary change of coordinates 
such that the first column of ${\mathcal A}$ is $||A_0|| e_0$ and the last column is $||A_d|| e_d$ for
orthonormal basis elements $e_j$. Once we have done so, we can only choose unitary maps
that preserve these vectors. 

Consider the identity $\langle A_0, A_{d-1}\rangle + \langle A_1, A_d \rangle = 0$.
There is thus a complex number $\alpha$ such that $a_{1d} = {\overline \alpha} ||A_0||$ and 
$a_{(d-1)0} = -\alpha ||A_d||$. The rest of the entries in the first and last row are complex parameters.
These parameters are of course subject to the inequalities forced upon us by equation (3) from Proposition 4.1.
The entries $a_{jk}$ for $1 \le j,k \le d-1$ define a submatrix of size $(d-1)$ by $(d-1)$, called the inner matrix. Let us call the column vectors
of this matrix $\beta_j$. For $k \ge 0$, the equations (4) now take the form
$$ \sum_k \langle \beta_k, \beta_{k+l} \rangle = \lambda_l, $$
where the complex numbers $\lambda_l$ are expressed in terms of the parameters 
used in the first and last column and the first and last
row of ${\mathcal A}$. Thus the inner $(d-1)$ by $(d-1)$ matrix satisfies upper-trace identities with known parameter values $\lambda_j$.

Given a matrix whose columns are the vectors $\beta_j$, we choose an orthonormal basis
putting it in upper triangular form, and furthermore making the diagonal entries real and non-negative. Doing so depends
only upon the inner products, and hence the upper-trace identities are preserved.
We have established an induction from degree $d-1$ with no constant term to degree $d$ possibly with a constant term.
Hence Theorem 4.1 follows from induction, using as basis steps the cases where the degrees are $1$ and $2$. \end{proof}

We next derive some corollaries, including information in the rational case.
Put 
$$G_{\alpha, r}=(\cos \alpha \frac{r-z}{1-rz} , \sin \alpha, 0,..., 0)$$ 
for $\alpha \in [0, \frac{\pi}{2})$ and $r \in [0,1)$.

\begin{proposition}\label{pdege}
For $N\ge 2$, let $F: \Delta \rightarrow \mathbb{B}_N$ be a holomorphic proper rational map of degree one. Then
\begin{enumerate}
\item $F$ is unitarily equivalent to some $G_{\alpha,r}$.

\item If $F$ is polynomial, then it is unitarily equivalent to some $G_{\alpha, 0}$.

\item For $\alpha, \beta \in [0, \frac{\pi}{2})$ and $0 \leq r_1, r_2 < 1$, the maps $G_{\alpha, r_1}$ and
$ G_{\beta, r_2}$ are unitarily equivalent if and only if $\alpha=\beta$ and $r_1=r_2$. 

\item A holomorphic rational proper map $F$ of degree one from $\Delta$ to $\mathbb{B}_N$ is spherically equivalent to $z \oplus 0$.
\end{enumerate}
\end{proposition}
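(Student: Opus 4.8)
The plan is to prove (1) first, since (2) is its special case $r=0$ and both (3) and (4) build on the normal form $G_{\alpha,r}$ produced in (1). I would begin by reducing $F=p/q$ to lowest terms and invoking Proposition 2.2, so $q$ has no zeros on $\overline{\Delta}$; since $F$ is holomorphic on $\Delta$ it has none inside either. Combined with Lemma 2.1 and the degree-one hypothesis, after scaling so that $q(0)=1$ this forces $q(z)=1-\overline{a}z$ with $|a|<1$ (the case $a=0$ being the polynomial one). Writing $p=P_0+P_1z$, a source rotation $z\mapsto e^{i\psi}z$, which is a ${\bf U}(1)$ element and hence permitted in unitary equivalence, makes the denominator coefficient real, so I may assume $a=r\in[0,1)$. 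Expanding $\|p(z)\|^2=|q(z)|^2$ on the circle and matching Fourier coefficients exactly as in Proposition 4.1 then yields $\langle P_1,P_0\rangle=\langle P_0,P_1\rangle=-r$ and $\|P_0\|^2+\|P_1\|^2=1+r^2$.

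The heart of (1), and the step I expect to be the \textbf{main obstacle}, is producing a valid angle $\alpha\in[0,\tfrac{\pi}{2})$. Setting $x=\|P_0\|^2$, properness gives $\|F(0)\|=\|P_0\|<1$, so $x<1$, while non-negative definiteness of the Gram matrix of $(P_0,P_1)$ gives $x\,\|P_1\|^2\ge r^2$. Substituting $\|P_1\|^2=1+r^2-x$ and factoring, this is precisely $(x-1)(x-r^2)\le 0$, which together with $x<1$ forces $r^2\le x<1$. Hence $s:=\frac{x-r^2}{1-r^2}\in[0,1)$, and I choose $\alpha$ with $\sin^2\alpha=s$. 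A direct computation shows the coefficient vectors of $G_{\alpha,r}$ have exactly the Gram matrix just found for $(P_0,P_1)$; since two tuples of vectors in ${\mathbb C}^N$ with equal Gram matrices differ by a unitary, some $U\in{\bf U}(N)$ carries the coefficients of $G_{\alpha,r}$ to those of $F$, and since both maps share the denominator $1-rz$ this gives $F=U\circ G_{\alpha,r}\circ u$ with $u$ the rotation above. For (2), if $F$ is polynomial then $q$ is constant, i.e. $a=r=0$, so the construction lands in $G_{\alpha,0}$; this is also just the $d=1$ diagonal normal form of Proposition 4.4.

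For the uniqueness statement (3) I would exhibit two unitary invariants. First, a target unitary leaves $q$ unchanged and a source rotation preserves the modulus of any pole, so whether a pole exists and its modulus $1/r$ are invariants; as $G_{\alpha,r}$ has a genuine pole at $1/r$ exactly when $r>0$, this recovers $r$, giving $r_1=r_2$. Second, $\|F(0)\|^2$ is invariant because source rotations fix $0$ and $U$ preserves norms, and $\|G_{\alpha,r}(0)\|^2=r^2+(1-r^2)\sin^2\alpha$ is strictly increasing in $\alpha$ on $[0,\tfrac{\pi}{2})$ for fixed $r<1$, so it recovers $\alpha$; hence $\alpha_1=\alpha_2$. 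The converse is trivial.

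Finally (4) reduces, via (1) and the fact that unitaries are automorphisms, to showing each $G_{\alpha,r}$ is spherically equivalent to $z\oplus 0$. Here I would write $G_{\alpha,r}=G_{\alpha,0}\circ\chi$ with the disk automorphism $\chi(z)=\frac{z-r}{1-rz}$, and then use the standard ball automorphism $\phi_a$ with $\phi_a(0)=a$ for $a=(0,\sin\alpha,0,\dots,0)$. Because $\langle (z,0,\dots,0),a\rangle=0$, the denominator of $\phi_a$ equals $1$ along $z\oplus 0$ and the projection terms collapse, giving $\phi_a(z\oplus 0)=(-\cos\alpha\,z,\sin\alpha,0,\dots,0)=G_{\alpha,0}(z)$. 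Thus $G_{\alpha,r}=\phi_a\circ(z\oplus 0)\circ\chi$, and transitivity of spherical equivalence completes the proof. Apart from this, everything is bookkeeping: the Fourier matching, the denominator normalization, and the explicit automorphism identities.
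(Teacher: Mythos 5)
Your proof is correct, but it follows a genuinely different route from the paper's in parts (1), (3), and (4). For part (1), after the same source-rotation normalization of the denominator, the paper rewrites the numerator via the basis change $zA+B=(z-r)A'+(1-rz)B'$, so that $F=\frac{z-r}{1-rz}A'+B'$; since the Blaschke factor is unimodular on the circle, the sphere-map condition immediately forces $\|A'\|^2+\|B'\|^2=1$ and $\langle A',B'\rangle=0$, and a target unitary finishes. You instead work with the raw coefficient vectors $P_0,P_1$, extract their Gram matrix by Fourier matching, and must then \emph{produce} $\alpha$ from the positivity constraint $(x-1)(x-r^2)\le 0$ before invoking the fact that tuples with equal Gram matrices are unitarily related. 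Both are valid; the paper's decomposition makes the orthogonality structure visible at once and avoids the determinant inequality, while your version stays inside the Proposition 4.1 framework of inner-product identities and makes explicit why the parameter $\alpha$ lands in $[0,\pi/2)$. For part (3) the paper says only ``a simple computation''; your two invariants (modulus of the pole recovers $r$, and $\|F(0)\|^2=r^2+(1-r^2)\sin^2\alpha$ recovers $\alpha$) supply that computation cleanly. For part (4) the paper normalizes $F(0)=0$ by an automorphism, reduces to $\frac{az}{1-bz}\oplus 0$, and uses that a degree-one proper self-map of the disk fixing $0$ is a rotation; you instead exhibit the explicit factorization $G_{\alpha,r}=\phi_a\circ(z\oplus 0)\circ\chi$ with $\chi(z)=\frac{z-r}{1-rz}$ and $a=(0,\sin\alpha,0,\dots,0)$, which I have checked: $\phi_a(z,0,\dots,0)=(-\cos\alpha\,z,\sin\alpha,0,\dots,0)=G_{\alpha,0}(z)$. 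Your version is more explicit and ties (4) directly to the normal form from (1); the paper's is shorter and independent of (1).
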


\begin{proof}
Part (2) follows directly from Proposition 4.2. Part (3) is a simple computation.
We will therefore prove only parts (1) and (4). 

To prove part (1), start with $F(z) = {zA +B \over 1- {\overline c} z}$, where $A,B$ are vectors and $|c|<1$. 
After replacing $z$ by $e^{i\theta} z$, a unitary change in the source,  we may assume that $c=r$ is real.
We then rewrite the numerator for new vectors $A'$ and $B'$ as
$$ zA+B = (z-r)A' + (1-r z) \ B'. $$
Doing so is possible because the matrix $\begin{pmatrix} 1 & -r \cr -r & 1\end{pmatrix}$ is invertible. Then
$$ F(z) = {z-r \over 1- rz}A' + B'. $$
Note that $A'$ is multiplied by a sphere map. Since $F$ is a sphere map, $||F(z)||^2=1$ on the circle. 
It follows that
$||A'||^2 + ||B'||^2 = 1$ and $\langle A', B'\rangle = 0$.
Now we make a unitary change of coordinates in the target such that $A'=(\cos(\alpha),0, \dots, 0)$
and $B'= (0, \sin{\alpha}, 0, \dots, 0)$. Thus $F$ is unitarily equivalent to $G_{\alpha, r}$.

Finally we prove part (4). By composing $F$ with an automorphism of $\mathbb{B}_N$
if necessary, we may assume $F(0)=0$ and hence $F(z) = {Az \over (1-bz)}$ for some constant vector $A$.
By applying a unitary map, we may further assume
$$F(z)= {az \over 1-bz}\oplus 0. $$
Since $F$ is a proper map from $\Delta$ to $\mathbb{B}_N$, it follows that $\frac{az}{1-bz}$ 
is a proper map from $\Delta$ to $\Delta$. 
This fact forces $b=0$
and $a=e^{i\theta}$, and concludes the proof.
\end{proof}

We define a two-parameter family of maps by
$$ J_{\alpha,\beta} = C_2 z^2 + C_1 z + C_0$$
where
$$ C_2 = \left(\cos(\alpha) \cos(\beta), 0 \right) $$
$$ C_1 = \left( \sin(\alpha) \sin(\beta) ,-\sin(\alpha) \cos(\beta)  \right) $$
$$ C_0 = \left(0, \cos(\alpha) \sin(\beta)\right). $$
Here $\alpha, \beta \in [0, {\pi \over 2})$. Note that $\langle C_2, C_0 \rangle = 0$, as required by (4).

We define a three-parameter family of maps by

$$ J_{\alpha,\beta, \gamma} = (D_2 z^2 + D_1 z + D_0) \oplus 0$$
where
$$ D_2 = \left(\cos(\alpha) \cos(\beta), 0, 0 \right) $$
$$ D_1 = \left( \sin(\alpha) \sin(\beta) \sin(\gamma),\sin(\alpha) \cos(\gamma) , -\sin(\alpha) \cos(\beta) \sin(\gamma) \right) $$
$$ D_0 = \left(0,0, \cos(\alpha) \sin(\beta)\right). $$
Here $\alpha, \beta \in [0, {\pi \over 2})$ and $\gamma \in [0,{\pi \over 2}]$. 
Here $\langle D_2, D_0 \rangle = 0$, as required by (4).

The following result follows from our normal forms when the degree is $2$.

\begin{proposition}\label{degpr} Let $F: \Delta \rightarrow \mathbb{B}_N$ be a holomorphic polynomial proper map of degree two.
\begin{enumerate}

\item Assume $N=2$. Then $F$ is unitarily equivalent to some  $J_{\alpha, \beta}$.

\item For  $\alpha_1, \alpha_2, \beta_1, \beta_2$ in $[0, \frac{\pi}{2})$, 
the maps $J_{\alpha_1, \beta_1}$ and $J_{\alpha_2, \beta_2}$ are unitarily equivalent if and only if $\alpha_1 =\alpha_2$
and $\beta_1 = \beta_2$.

\item Assume $N\ge 3$. Then $F$ is unitarily equivalent to some $ J_{\alpha, \beta, \gamma}$.

\item For $\alpha_1, \alpha_2, \beta_1, \beta_2, \in (0, \frac{\pi}{2}), \gamma_1, \gamma_2 \in [0, \frac{\pi}{2})$, the maps $ J_{\alpha_1, \beta_1, \gamma_1}$ and $J_{\alpha_2, \beta_2, \gamma_2}$ are unitarily equivalent if and only if
$(\alpha_{1}, \beta_1, \gamma_1)=(\alpha_2, \beta_2, \gamma_2)$.
\end{enumerate}

\end{proposition}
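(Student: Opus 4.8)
The plan is to pass entirely to the inner-product (Gram) matrices and invoke Corollary 4.1, which identifies unitary equivalence of degree-$d$ polynomial sphere maps with $*$-equivalence of the Hermitian matrices $B(\cdot)$; equivalently, one can read the parameters off the degree-two normal form of Proposition 4.2. First I would record the shape forced on $B=B(F)$ by Proposition 4.1 in degree two: equation (4) with $l=2$ gives $B_{02}=0$, equation (4) with $l=1$ gives $B_{21}=-B_{10}$ and hence $B_{12}=-B_{01}$, while (3) gives $B_{00}+B_{11}+B_{22}=1$. Thus every such $B$ is determined by the three non-negative diagonal entries and the single complex entry $B_{01}$. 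A $*$-equivalence acts by the unitary similarity $B\mapsto D^{-1}BD$ with $D=\mathrm{diag}(1,e^{i\theta},e^{2i\theta})$; in particular it rotates $B_{01}$ by $e^{i\theta}$, preserves the diagonal and the moduli of off-diagonal entries, and preserves the rank and positive semidefiniteness. So after a $*$-equivalence I may assume $B_{01}=-t$ with $t\ge 0$ real, and then $B_{12}=t$.

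Next I would compute the Gram matrices of the two normal families. A direct calculation gives, for both $J_{\alpha,\beta}$ and $J_{\alpha,\beta,\gamma}$,
$$B_{11}=\sin^2\alpha,\qquad B_{00}=\cos^2\alpha\,\sin^2\beta,\qquad B_{22}=\cos^2\alpha\,\cos^2\beta,$$
with $B_{01}=-t$, where $t=\sin\alpha\cos\alpha\sin\beta\cos\beta$ in the two-dimensional case and $t=\sin\alpha\cos\alpha\sin\beta\cos\beta\,\sin\gamma$ in the higher-dimensional case. For existence (parts (1) and (3)) I would, given $F$, normalize $B(F)$ as above and solve for the parameters: $\alpha\in[0,\tfrac{\pi}{2})$ from $B_{11}$ (note $B_{22}=\|A_2\|^2>0$ since $\deg F=2$, so $B_{11}<1$ and $\cos\alpha\neq 0$), then $\beta\in[0,\tfrac{\pi}{2})$ from the ratio of $B_{00}$ and $B_{22}$. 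The remaining entry $t$ is then controlled by
$$\det B=B_{00}B_{11}B_{22}-t^2\,(B_{00}+B_{22}).$$
When $N=2$ the coefficient vectors span a space of dimension at most two, so $\mathrm{rank}\,B\le 2$ and $\det B=0$; this pins $t$ to exactly the value produced by $J_{\alpha,\beta}$, giving part (1). When $N\ge 3$ there is no rank restriction, but positive semidefiniteness forces $\det B\ge 0$, which is precisely the inequality $\sin^2\gamma\le 1$ needed to solve for some $\gamma\in[0,\tfrac{\pi}{2}]$; this gives part (3). In each case Corollary 4.1 upgrades the equality of $*$-equivalence classes to a unitary equivalence of maps.

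For uniqueness (parts (2) and (4)) I would run this correspondence backwards. Since each $B(J)$ is real with $B_{01}=-t\le 0$ and $B_{12}=t\ge 0$, and a $*$-equivalence preserves the diagonal and the moduli of the off-diagonal entries, two such normalized matrices can be $*$-equivalent only if they are already equal; hence $*$-equivalence collapses to equality of Gram matrices. It then suffices to check that $(\alpha,\beta)\mapsto B(J_{\alpha,\beta})$ and $(\alpha,\beta,\gamma)\mapsto B(J_{\alpha,\beta,\gamma})$ are injective on the stated ranges: $\alpha$ is recovered from $B_{11}$, $\beta$ from $B_{00}$ and $B_{22}$, and $\gamma$ from $t$ once $\alpha,\beta$ are known. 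The open intervals in part (4) are exactly what make this last step well defined: if $\alpha=0$ or $\beta=0$ the factor multiplying $\sin\gamma$ vanishes and $\gamma$ becomes invisible, while $\gamma=\tfrac{\pi}{2}$ forces $\det B=0$ and collapses $J_{\alpha,\beta,\gamma}$ onto the two-dimensional family; restricting $\alpha,\beta\in(0,\tfrac{\pi}{2})$ and $\gamma\in[0,\tfrac{\pi}{2})$ removes these degeneracies. In part (2) no such collapse occurs, so the closed range $[0,\tfrac{\pi}{2})$ for both $\alpha$ and $\beta$ is already enough for injectivity.

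The main obstacle is bookkeeping the degenerate configurations rather than any deep difficulty: I must verify that the rank condition for $N=2$ and the semidefiniteness condition for $N\ge 3$ interact correctly with the determinant formula, so that $\gamma$ exists and lies in the right interval, and I must isolate precisely the parameter values ($\alpha$ or $\beta$ equal to $0$, or $\gamma=\tfrac{\pi}{2}$) at which the coordinates fail to be recoverable, so that the uniqueness statements are asserted on exactly the ranges where the parametrizations are injective.
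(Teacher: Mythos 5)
Your proposal is correct and follows essentially the route the paper intends: the paper offers no written proof beyond the remark that the proposition ``follows from our normal forms when the degree is $2$,'' i.e., from reducing everything to the inner products $B_{jk}$ via Proposition 4.1, Corollary 4.1, and the $d=2$ normal form, which is exactly what you do. Your explicit use of $\det B = B_{00}B_{11}B_{22} - t^2(B_{00}+B_{22})$ together with the rank condition (for $N=2$) and positive semidefiniteness (for $N\ge 3$) is a correct and complete way to carry out the parameter bookkeeping that the paper leaves implicit.
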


\begin{remark}
The maps $J_{0, \beta, \gamma}, J_{\alpha, 0, \gamma}, J_{\alpha, \beta, \frac{\pi}{2}}$  in part (3) of Proposition \ref{degpr}
each have embedding dimension $2$. 

\end{remark}

\medskip We next illustrate the proof of Theorem 4.1 in the degree $4$ case. Write
$$ f(z) = A_0 + A_1 z + A_2 z^2 + A_3 z^3 + A_4 z^4. $$
The condition $\langle A_0, A_4 \rangle = 0$ enables us to choose coordinates such that
$$ {\mathcal A} = \begin{pmatrix} ||A_0|| & * & * & * & 0 \cr 0 & * & * & * & 0 \cr  0 & * & * & * & 0 \cr 0 & * & * & * & 0 \cr 0 & * & * & * & ||A_4|| \end{pmatrix} $$

Now use the condition $\langle A_0, A_3\rangle + \langle A_1, A_4 \rangle = 0 $ to get
$$ {\mathcal A} = \begin{pmatrix} ||A_0|| & ** & ** & -\alpha ||A_4|| & 0 \cr 0 & * & * & * & 0 \cr  0 & * & * & * & 0 \cr 0 & * & * & * & 0 \cr 0 & {\overline \alpha} ||A_0||  & ** &  ** & ||A_4|| \end{pmatrix} $$
Regard the ** as complex parameters. The rest of the identities become upper-trace identities on the inner three-by-three
matrix. Since everything depends only on the inner products, things are unitarily invariant.
We can then use a unitary to put the inner matrix in upper-triangular form:

$$ \begin{pmatrix} ||A_0|| & ** & ** & -\alpha ||A_4|| & 0 \cr 0 & * & * & * & 0 \cr  0 & 0 & * & * & 0 \cr 0 & 0 & 0 & * & 0 \cr 0 & {\overline \alpha} ||A_0||  & ** &  ** & ||A_4|| \end{pmatrix}. $$

The inner matrix may be regarded as $3$ column vectors $\beta_1, \beta_2, \beta_3$.
The upper-trace identities express
$$ ||\beta_1||^2 + ||\beta_2||^2 + ||\beta_3||^2   \eqno (9.1) $$
$$ \langle \beta_1, \beta_2 \rangle + \langle \beta_2, \beta_3 \rangle \eqno(9.2)$$
$$ \langle \beta_1, \beta_3 \rangle \eqno (9.3) $$
as known quantities in terms of the various parameters along the first and last row of ${\mathcal A}$.
There are $5$ complex and $2$ real parameters along the first and last row. There are $6$ unknown parameters
in the inner matrix, but these satisfy $3$ equations arising from expressing (9.1), (9.2), (9.3) in terms
of the known parameters. Hence there remain $3$ new parameters. By using diagonal unitaries we obtain in total
$10$ parameters of which $4$ are real.

\section{homotopy}

Let $\mathcal{H}(n,N)$ denote the  set of homotopy equivalence classes of rational proper
holomorphic maps from $\mathbb{B}_n$ to $\mathbb{B}_N$. We write $h(n,N)$ for the 
cardinality of $\mathcal{H}(n,N)$. We recall some basic facts
about these homotopy classes. Each class in $\mathcal{H}(1,1)$ is given by $z^k$ for a positive integer $k$. 
When $N\ge n\ge 2$, the cardinality  $h(n,N)$ is always finite. See [DL1].  Of course, when $N<n$, the set is empty.
Very few of the the $h(n,N)$ are known. We have $h(2,3)=4$.  When $n \geq 3,$ we have
$h(n, N)=1$ if $n \leq N \leq 2n-2$ and $h(n, 2n-1)=2$. If $n \geq 4$, then $ h(n, 2n)=1$. 
We next establish  the following new result along these lines.

\begin{theorem}\label{t1}
For $N\ge 2$, all holomorphic rational proper maps from $\Delta$ to $\mathbb{B}_N$  belong to one homotopy equivalence class. Thus $h(1,N)=1$ if $N \geq 2$.
\end{theorem}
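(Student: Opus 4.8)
The plan is to reduce the homotopy classification to a single, maximally simple representative. Every rational proper map $F:\Delta \to \mathbb{B}_N$ is, by Proposition 2.2, holomorphic across the circle, so $F$ restricts to a polynomial-free rational sphere map whose denominator has the normalized form in Lemma 2.1. The target-dimension flexibility in Lemma 3.1 is the key enabling tool: since I am free to pad with zeros and stabilize the target, I may always assume the target is as large as convenient, and any two maps into $\mathbb{B}_N$ become comparable after passing to a common target dimension. Thus it suffices to produce, for each such $F$, a continuous path of proper maps $\Delta \to \mathbb{B}_M$ (for some $M \ge N$, after applying $\oplus 0$) connecting $F \oplus 0$ to a fixed base map such as $z \oplus 0$, since homotopy equivalence is transitive and the class of $z \oplus 0$ is dimension-independent in the stabilized sense.

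The central step is to deform an arbitrary $F$ to its constant term. First I would compose with a target automorphism, or simply use the path $t \mapsto$ (scaling of the non-constant part), to kill the poles and the higher-order behavior. Concretely, writing $F = \frac{(p_1,\dots,p_N)}{q}$ with $q$ as in Lemma 2.1, I would first homotope the denominator to $1$ by continuously moving the pole locations $a_k$ toward the origin and the factor $z^m$ away, mirroring the degree-one deformation in Example 3.1 where $a$ is replaced by $(1-t)a$; throughout, the map must remain a sphere map on the circle, so the homotopy must be built to preserve $\|F_t\|^2 = 1$. The cleanest device is the orthogonal-sum trick from the proof of Lemma 3.1: the homotopy $t \mapsto (\sqrt{1-t^2}\, F,\; t z)$ in target dimension $N+1$ slides $F \oplus 0$ to $0 \oplus z$ while keeping the circle mapped to the sphere at every $t$, because $(1-t^2)\|F\|^2 + t^2|z|^2 = (1-t^2) + t^2 = 1$ on the circle. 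This single homotopy already connects any $F \oplus 0$ to $0 \oplus z$, hence all maps into $\mathbb{B}_{N+1}$ are homotopic to the common map $0 \oplus z$, which is itself homotopic (by a coordinate rotation in the unitary group, path-connected) to $z \oplus 0$.

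I would then assemble the argument as follows. Given $F, G : \Delta \to \mathbb{B}_N$ proper rational, form $F \oplus 0$ and $G \oplus 0$ in $\mathbb{B}_{N+1}$. The homotopy above connects each to $0 \oplus z$ in $\mathbb{B}_{N+1}$; by symmetry and transitivity $F \oplus 0$ and $G \oplus 0$ are homotopic in target dimension $N+1$, which is exactly the definition of $F$ and $G$ being homotopic. Since this holds for every pair and every $N \ge 2$ admits the base map $z \oplus 0$, there is exactly one homotopy class, giving $h(1,N)=1$. The only subtlety to check is that the map $(z,t) \mapsto H_t(z)$ is continuous and that each $H_t$ is genuinely a proper holomorphic map $\Delta \to \mathbb{B}_{N+1}$, not merely a sphere map on the circle; here properness on the open disk follows because $\|H_t(z)\| < 1$ for $z \in \Delta$ whenever $\|F(z)\| \le 1$ there, which holds by the maximum principle since $\|F\|^2$ is subharmonic and equals $1$ on the boundary.

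The main obstacle I anticipate is not the homotopy construction itself—the orthogonal-sum deformation is essentially free once Lemma 3.1 is in hand—but rather the bookkeeping needed to guarantee properness (interior values strictly inside the ball) uniformly across the whole one-parameter family, especially at endpoints and when $F$ has interior poles in the rational-sphere-map sense. I would handle this by first invoking Proposition 2.2 to discard interior poles (a genuine proper map has none), so that $F$ is holomorphic on $\overline{\Delta}$ and the subharmonicity argument applies cleanly at every $t$; the remaining verification that $\sqrt{1-t^2}\,F$ and $tz$ combine to a proper map is then routine.
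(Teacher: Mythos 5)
Your argument has a genuine gap, and it is the central one: you prove the wrong statement. The homotopy $t \mapsto (\sqrt{1-t^2}\,F,\ tz)$ connects $F \oplus 0$ to $0 \oplus z$ only in target dimension $N+1$, and your conclusion that ``$F \oplus 0$ and $G \oplus 0$ are homotopic in target dimension $N+1$, which is exactly the definition of $F$ and $G$ being homotopic'' misreads Definition 3.2. The equivalence relation underlying $\mathcal{H}(1,N)$ and $h(1,N)$ is homotopy \emph{in target dimension $N$}: the entire family $H_t$ must map into $\mathbb{B}_N$ itself. What you have established is precisely Lemma 3.1, which the paper already states and which holds for \emph{every} source and target dimension; if stabilizing the target were permitted, then $h(n,N)$ would always equal $1$ and the paper's quoted fact $h(2,3)=4$ would be contradicted. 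So the ``stabilized sense'' you invoke in your first paragraph is exactly the move the theorem forbids, and nothing in your write-up produces a homotopy that stays inside $\mathbb{B}_N$.

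The paper's actual proof is an induction on the degree $d$ of $F$, carried out entirely in target dimension $N$. The base case $d=1$ is Proposition 4.3(4): a degree-one rational proper map is spherically equivalent, hence homotopic, to $z \oplus 0$. For the inductive step, one first replaces $F$ by a spherically equivalent map $F_1$ with $F_1(0)=0$; Lemma 5.2 (the upper-trace identity $\langle P_d, P_0\rangle = q_d$) guarantees $\deg p > \deg q$, so $F_1 = zF_2$ with $\deg F_2 = d-1$. By induction $F_2$ is homotopic to $z \oplus 0$, and Lemma 5.1 (multiplying a homotopy of sphere maps by a finite Blaschke product preserves propriety and continuity) upgrades this to a homotopy from $F_1 = zF_2$ to $z^2 \oplus 0$, which in turn is homotopic to $z \oplus 0$ inside $\mathbb{B}_N$ since $N \ge 2$. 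If you want to salvage your approach, you would need to replace the padding step with an argument of this kind; the factorization $F_1 = zF_2$ and Lemma 5.2 are the essential ingredients you are missing.
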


\begin{lemma}\label{lemmah} Let $F$ and $G$ be homotopic proper maps from $\Delta$ to some ${\mathbb B}_N$.
Let $\zeta$ be a proper map from $\Delta$ to itself. Then $\zeta F$ and $\zeta G$ are
also homotopic proper maps from $\Delta$ to ${\mathbb B}_N$. 
Similarly, if $(f_1,...,f_N)$ and $(g_1,...,g_N)$ are homotopic via polynomial sphere maps
from $\Delta$ to $\mathbb{B}_N$, and $\zeta$ is a polynomial proper map, then $(f_1,...,f_{N-1}, \zeta 
f_N)$ and $(g_1,...,g_{N_1}, \zeta g_N)$
are homotopic via polynomial proper maps.
\end{lemma}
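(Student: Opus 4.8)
The plan is to carry the given homotopy directly through the multiplication operator in each case; in both parts the only thing requiring verification is that properness (and, for the second statement, the sphere condition) survives at every intermediate parameter value, and this is controlled entirely by the fact that $|\zeta| = 1$ on the circle while $|\zeta| \le 1$ on $\Delta$.

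For the first statement, I would let $H_t$ be a homotopy from $F$ to $G$, so that each $H_t : \Delta \to {\mathbb B}_N$ is a proper holomorphic map, the assignment $(z,t) \mapsto H_t(z)$ is continuous, $H_0 = F$, and $H_1 = G$. Set $K_t = \zeta H_t$. Each $K_t$ is holomorphic, being the product of the scalar holomorphic function $\zeta$ with the vector-valued holomorphic map $H_t$. Because $H_t$ takes values in the open ball we have $||H_t(z)|| < 1$ on $\Delta$, and because $|\zeta| \le 1$ on $\Delta$ we get $||K_t(z)|| = |\zeta(z)|\,||H_t(z)|| < 1$, so $K_t$ maps $\Delta$ into ${\mathbb B}_N$. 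Properness then follows from the boundary behaviour: as $z \to \partial\Delta$ we have $|\zeta(z)| \to 1$ since $\zeta$ is a proper self-map and $||H_t(z)|| \to 1$ since $H_t$ is proper, whence $||K_t(z)|| \to 1$. The joint continuity of $(z,t) \mapsto \zeta(z) H_t(z)$ and the endpoint identities $K_0 = \zeta F$ and $K_1 = \zeta G$ are immediate, so $K_t$ is the required homotopy.

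For the second statement, I would take a homotopy $(h_1^t, \dots, h_N^t)$ through polynomial sphere maps with $h^0 = (f_1,\dots,f_N)$ and $h^1 = (g_1,\dots,g_N)$, and define $\tilde h^t = (h_1^t, \dots, h_{N-1}^t, \zeta h_N^t)$. Each $\tilde h^t$ is a polynomial since $\zeta$ and the $h_j^t$ are. The key computation is on the unit circle, where $|\zeta| = 1$: there $||\tilde h^t||^2 = \sum_{j=1}^{N-1} |h_j^t|^2 + |\zeta|^2 |h_N^t|^2 = \sum_{j=1}^{N} |h_j^t|^2 = 1$, so each $\tilde h^t$ again sends the circle to the sphere. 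Inside the disk the bound $|\zeta| \le 1$ gives $||\tilde h^t||^2 \le \sum_{j=1}^{N} |h_j^t|^2 = ||h^t||^2 < 1$, the final inequality because the $h^t$ map $\Delta$ into the open ball (which also follows from the maximum principle applied to the subharmonic function $||h^t||^2$). Hence each $\tilde h^t$ is a proper polynomial map into ${\mathbb B}_N$. Continuity in $(z,t)$ together with the endpoint values $\tilde h^0 = (f_1,\dots,f_{N-1},\zeta f_N)$ and $\tilde h^1 = (g_1,\dots,g_{N-1},\zeta g_N)$ then finishes this case.

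Neither part presents a genuine obstacle; the single point deserving attention is that in the second statement we scale only the last component by $\zeta$, and the sphere condition survives precisely because $|\zeta| \equiv 1$ on the circle. The argument uses nothing about $\zeta$ beyond $|\zeta| = 1$ on $\partial\Delta$ and $|\zeta| \le 1$ on $\Delta$; the polynomiality of $\zeta$ enters only to keep the resulting maps polynomial in the second statement.
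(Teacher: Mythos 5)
Your proposal is correct and follows essentially the same route as the paper: push the given homotopy through multiplication by $\zeta$ and observe that, since $|\zeta|=1$ on the circle, the sphere/properness condition is preserved at every parameter value. The paper's version is terser (and reduces the polynomial case to $\zeta=z^m$, which is the general polynomial proper self-map of $\Delta$ up to a unimodular constant), but the substance is identical.
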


\begin{proof}
Suppose for $t \in [0,1]$ that $H_t$ is a family of proper maps, depending continuously on $t$,  with $H_0=F$ and $H_1=G$.
Then $\zeta H_t$ also depends continuously on $t$. Since $\zeta$ maps the circle to itself,
$||\zeta H_t||^2 = ||H_t||^2$ on the circle, and hence $\zeta H_t$ is also proper. The second statement follows from a similar argument, assuming $\zeta=z^m$. 
\end{proof}

\begin{lemma}\label{lemmadeg}
Let $F=\frac{p}{q}=\frac{(p_1,...,p_N)}{q}$ be a holomorphic rational proper map from $\Delta$ to $\mathbb{B}_N$ that is reduced to lowest terms with $F(0)=0$. Then $\mathrm{deg}(p) > \mathrm{deg}(q).$
\end{lemma}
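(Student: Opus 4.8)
The plan is to work directly with the boundary identity $\|p(z)\|^2 = |q(z)|^2$, which holds for $|z|=1$ because $F$ is a sphere map, and to extract the degree inequality by comparing the highest Fourier modes of the two sides. First I would record two facts about the denominator. Since $F$ is reduced to lowest terms and holomorphic on $\Delta$, the polynomial $q$ cannot vanish in the open disk: a zero $z_0\in\Delta$ of $q$ would force $p(z_0)=0$ in all components, producing a common factor $(z-z_0)$ and contradicting lowest terms. In particular $q(0)\ne 0$. Combined with $F(0)=0$ this gives $p(0)=0$, so every component vanishes at the origin, i.e. $p_i(0)=0$ for all $i$.

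Next I would rewrite the boundary identity as an equality of Laurent polynomials. Writing $p_i(z)=\sum_k c_{i,k}z^k$ and $q(z)=\sum_k a_k z^k$, and using $\overline z = 1/z$ on the circle, both $\|p(z)\|^2$ and $|q(z)|^2$ become Laurent polynomials in $z$. Since they agree at the infinitely many points of the unit circle, they agree coefficient by coefficient, and the proof reduces to comparing the top harmonic on each side.

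On the right, the coefficient of $z^{\deg q}$ arises from the single pair consisting of the leading term of $q$ and $\overline{q(0)}$, namely $a_{\deg q}\,\overline{a_0}$, which is nonzero because both the leading coefficient and $q(0)$ are nonzero; hence the highest nonvanishing harmonic of the right side sits exactly at level $\deg q$. On the left, the coefficient of $z^{\deg p}$ equals $\sum_i c_{i,\deg p}\,\overline{c_{i,0}}$, and this vanishes because $c_{i,0}=p_i(0)=0$ for every $i$; more generally the left side has no harmonic of order $\ge \deg p$. Matching the two sides forces $\deg q \le \deg p - 1$, which is the desired strict inequality $\deg(p) > \deg(q)$.

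The one step requiring genuine care is verifying that the top harmonic on the right does not cancel, and this is exactly where $q(0)\ne 0$ is essential: the coefficient of $z^{\deg q}$ in $|q|^2$ is a single product rather than a sum, so no cancellation can occur and the highest mode truly lives at level $\deg q$. The vanishing on the left is automatic once $p(0)=0$ is known. I expect no obstacle beyond bookkeeping the Fourier coefficients and invoking uniqueness of Laurent expansions.
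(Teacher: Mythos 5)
Your proposal is correct and follows essentially the same route as the paper: the paper also compares the top Fourier coefficient of $\|p(z)\|^2=|q(z)|^2$ on the circle (phrased via the upper-trace identities of Proposition 4.1), obtaining $\langle P_d,P_0\rangle = q_d\overline{q_0}$ and concluding $q_d=0$ from $p(0)=0$. You simply rederive that identity directly from the Laurent expansion and, helpfully, make explicit the step the paper leaves implicit, namely that $q(0)\neq 0$ forces $p(0)=0$.
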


\begin{proof} Assume $F$ has degree $d$. Put $p(z) = \sum_{j=0}^d P_j z^j$ and $q(z) = \sum_{j=0}^d q_j z^j$. 
The condition $||p(z)||^2 = |q(z)|^2$ on the sphere yields upper-trace identities
as in Proposition 4.1. Assuming that $q(0)=1$, one of these identities
gives $\langle P_d, P_0 \rangle = q_d$.  Since $P_0$ is assumed to be $0$, the conclusion follows.
\end{proof}

\begin{proof} (of Theorem 5.1) Let $F$ be any holomorphic rational proper map from $\Delta$ to $\mathbb{B}_N$. It 
suffices 
to show that $F$ is homotopic to $z \oplus 0$. Write $d=\mathrm{deg}(F)$. We will prove the result by induction
on the degree.  Part 4 of Proposition 4.3 shows that the statement holds for $d=1$. We assume it holds for $d \leq k$ 
for some $k \geq 1$. Now consider $F$ of degree $d=k+1$. Note 
that $F$ is spherically equivalent and thus homotopic to a 
rational proper map $F_1$ with $F_{1}(0)=0$ and $\mathrm{deg}(F_1)=d=k+1$. 
Put $F_1=zF_2$. It follows from Lemma \ref{lemmadeg} that $\mathrm{deg}(F_2) = k.$ By
 the induction hypothesis, $F_2$ is homotopic to $z \oplus 0$. 
We conclude by Lemma \ref{lemmah} that $F_1=zF_2$ is 
homotopic to $z^2 \oplus 0$ which is also homotopic to $z \oplus 0$. Theorem \ref{t1} follows.
\end{proof}

\bigskip

We write $\mathcal{S}(N,d)$ for the solution set of the system from Proposition 4.1:
$$\mathcal{S}(N, d)=\{(A_0,...,A_d):~\text{equations (3) and (4) are satisfied} \}.$$
Let $\mathcal{P}(N,d)$ be the collection of all holomorphic polynomial sphere maps  from $\Delta$ to $\mathbb{C}^N$
of degree at most $d$. There is a one-to-one correspondence between
$\mathcal{S}(N, d)$ and $\mathcal{P}(N, d)$.

The following result shows that $\mathcal{S}(N, d)$ is a connected subvariety of
$\mathbb{C}^{N(d+1)}$ when $N \geq  2$.

\begin{proposition}\label{prpl}
Assume $N \geq  2$. Any two polynomial sphere maps from $\Delta$ to $\mathbb{C}^N$
of degree $d$ are homotopic via polynomial sphere maps.
\end{proposition}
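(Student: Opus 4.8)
The plan is to prove that $\mathcal{S}(N,d)$ is path-connected by contracting every degree-$\le d$ polynomial sphere map to one fixed constant. I regard such a map as a tuple $(A_0,\dots,A_d)\in(\mathbb{C}^N)^{d+1}$ satisfying the linear system (3) and (4) of Proposition 4.1, and I allow three elementary moves, each a homotopy inside $\mathcal{S}(N,d)$: post-composition with a path in the connected group $\mathbf{U}(N)$; the source rotation $z\mapsto e^{i\theta}z$, i.e. $A_j\mapsto e^{ij\theta}A_j$; and a contraction of any map that misses a coordinate direction. For the last, if after a unitary the final coordinate of $f$ vanishes, so $f=(g,0)$ with $g:\Delta\to\mathbb{C}^{N-1}$, then $H_t=\left(\cos(\tfrac{\pi}{2}t)\,g,\ \sin(\tfrac{\pi}{2}t)\right)$ is a homotopy of degree-$\le d$ sphere maps from $f$ to the constant $e_N$, since $\|H_t\|^2=\cos^2+\sin^2=1$ on the circle. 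As the unit constants form the connected sphere $S^{2N-1}$, every map of embedding dimension strictly less than $N$ is homotopic to one fixed constant. It thus remains to homotope an arbitrary $f$ to a map whose coefficient vectors span fewer than $N$ dimensions.

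Before doing this I would record the easy regime $N\ge d+1$, which also locates the difficulty. Equations (3) and (4) are \emph{linear} in the Gram matrix $B=(\langle A_j,A_k\rangle)$, so the admissible Gram matrices form the intersection of an affine subspace with the non-negative cone, hence a convex set. When $N\ge d+1$ there is no rank constraint: the segment $B(t)=(1-t)B(f)+tB(g)$ stays admissible, and lifting it through the continuous square root $A(t)=B(t)^{1/2}$, padded with zero rows to size $N$, produces the homotopy after connecting $f,g$ to these representatives by a unitary path. For $N\le d$ this fails precisely because the condition $\operatorname{rank}\le N$ is not convex — the average of two rank-$N$ admissible Gram matrices can have rank up to $2N$ — so one cannot stay in $\mathbb{C}^N$. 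This is exactly why $N=1$ genuinely fails (the winding number of $\theta\mapsto f(e^{i\theta})\in S^1$ is invariant), and why for $N\ge2$ the simple connectivity of $S^{2N-1}$ must be used inside the polynomial category.

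The main reduction is an induction on $d$, the base case $d=0$ being $\mathcal{S}(N,0)=S^{2N-1}$. For the inductive step I claim $f$ is homotopic inside $\mathcal{S}(N,d)$ to a map with vanishing constant term $A_0=0$. Granting this, such a map factors as $f=z\,h$ with $h=\sum_{i=0}^{d-1}A_{i+1}z^i$, and $h$ is again a sphere map because $\|zh\|^2=\|h\|^2$ on the circle, so $h\in\mathcal{S}(N,d-1)$. By the induction hypothesis $h$ is homotopic to the constant $e_1$ through degree-$\le d-1$ sphere maps, and Lemma \ref{lemmah} lets me multiply that homotopy by $\zeta=z$ to conclude that $f=zh$ is homotopic to $z\,e_1=(z,0,\dots,0)$. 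Since $ze_1$ has embedding dimension $1<N$, the contraction move sends it to a constant, closing the induction.

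The hard part, and the step I expect to be the main obstacle, is the claim that the constant term can be removed while staying a degree-$\le d$ sphere map. My plan is to use the normal form of Theorem 4.1: after a unitary I may assume $A_0=\|A_0\|e_0$ and $A_d=\|A_d\|e_d$, with the remaining $e_0$- and $e_d$-components displayed as the outer parameters, and with the inner $(d-1)\times(d-1)$ block in the coordinates $e_1,\dots,e_{d-1}$ satisfying the upper-trace identities of Definition 4.2 with parameter values $\lambda_l$ built from the outer parameters. I then drive the entire outer layer to zero, continuously; as it vanishes the constant term $A_0$ goes to $0$ and the $\lambda_l$ deform continuously to the sphere values $\lambda_0=1$, $\lambda_{l\ne0}=0$. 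Turning this into an honest homotopy requires, for each intermediate parameter value, re-solving the upper-trace identities for the inner block inside $\mathbb{C}^N$ and doing so continuously. This re-solving is itself linear in the inner Gram matrix, so its admissible set is again convex and is handled by the square-root lift exactly as in the easy regime once $N$ exceeds the size $d-1$ of the inner block; when $N$ is still smaller one peels another layer by the same normal-form device on the strictly smaller block, and the recursion terminates because the block size decreases. The genuine technical point to verify — the heart of the argument — is that the deforming right-hand sides $\lambda_l$ remain realizable by a non-negative inner Gram matrix of rank $\le N$ throughout; I would secure this by choosing the outer-layer path so that the induced path of $\lambda_l$ stays in the (convex) image of the upper-trace map, after which continuous selection of inner solutions is automatic and the induction concludes.
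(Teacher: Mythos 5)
Your overall scaffolding (induction on the degree, the contraction of a map that misses a coordinate direction to a constant, and the use of Lemma \ref{lemmah} to multiply a homotopy by $z$) is sound and matches the paper's strategy. But the step you yourself flag as the heart of the argument --- homotoping an arbitrary degree-$d$ sphere map to one with $A_0=0$ while staying in $\mathcal{P}(N,d)$ --- is a genuine gap, and it does not close as described. First, Theorem 4.1 is stated and proved under the normalization $N=d+1$: the normal form chooses an orthonormal basis $e_0,\dots,e_d$ of the target, which is unavailable exactly in the regime $N\le d$ that your convexity argument does not cover. Second, your realizability claim rests on the ``(convex) image of the upper-trace map,'' but the relevant domain is the set of non-negative Gram matrices of rank at most $N$, which (as you note earlier in your own proposal) is not convex when $N$ is smaller than the matrix size; the image of a non-convex set under a linear map is not known to be convex, so the assertion that the deformed $\lambda_l$ stay realizable throughout the path is unsupported. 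Third, even granting pointwise realizability, ``continuous selection of inner solutions is automatic'' is not: you need a continuous path of admissible Gram matrices together with a continuous rank-$\le N$ factorization compatible with the moving outer layer, and none of this is constructed.

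The paper sidesteps the need to kill the whole constant term. Apply a target unitary $U$ (connected to the identity, hence a homotopy) so that $UA_d=(0,\dots,0,\|A_d\|)$. Then every component of $g=Uf$ except the last has degree at most $d-1$, and the single identity $\langle A_0,A_d\rangle=0$ from (4) forces the last component $g_N$ to have no constant term, so $g_N=zu_N$. Since $|z|=1$ on the circle, $h=(g_1,\dots,g_{N-1},u_N)$ is again a sphere map, now of degree at most $d-1$, and the induction hypothesis plus Lemma \ref{lemmah} (applied to the last component only, which is the second statement of that lemma) finishes the step. If you want to salvage your version, replace ``make $A_0=0$'' by this weaker and automatically achievable normalization; as written, your key claim is unproved and, in the case $N\le d$ where it is actually needed, your proposed tools do not apply.
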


\begin{proof}
The conclusion follows if $\mathcal{P}(N, d)$ is path-connected. We prove the
path-connectedness in the next lemma.
\begin{lemma}\label{lemma24}
$\mathcal{P}(N,d)$ is path-connected for any $N \geq 2$ and $d \geq 0$.
\end{lemma}

{\it Proof}. We prove the lemma by induction on $d$. Proposition 4.2 shows that
the lemma holds for $d=0$ and $d=1$. We assume for some $k \geq 1$ that it is true for $d \leq k$ for
and we will prove it for $d=k+1$. Fix  $f \in \mathcal{P}(N, d)$ with $d=k+1$. 
Write $f=\sum_{j=0}^d  A_j z^j$ as usual; here we regard the $A_j$ as column vectors.
Note that $f$ has $N$ components. It suffices to show $f$ is homotopic to $z \oplus 0$ in $\mathcal{P}(N,d)$.  Write
$$ A=\left( A_0,\dots, A_d \right).$$

We may assume $f$ is of degree $d$ and hence $A_d \neq 0$.  Find a unitary matrix $U$ such that
$UA_d= (0,...,0, ||A_d||)$. Put $B=UA=(B_0,...,B_d)$ and set $g=Uf$.
Since the unitary group is path-connected, $g$ is homotopic to $f$. Moreover,
by the form of $B_d$, each $g_j$ is of degree at most $d-1=k$ except for $g_N$ which is of
degree $d$.
We still have $\langle B_0, B_d \rangle =0$.  Hence the last
component of $B_0$ is zero and thus $g_N$
has no constant term. Write $g_N=zu_N$. We set
$$ h=(g_1,...,g_{N-1}, u_N).$$
Note that  $h \in \mathcal{P}(N,d-1)$. It follows from the induction
hypothesis that $h$ is homotopic to $z \oplus 0$ in $\mathcal{P}(N,d-1)$. It follows
from Lemma \ref{lemmah} that
$g$ (and thus $f$) is homotopic to $z^2 \oplus 0$ and thus to $z \oplus 0$ in
$\mathcal{P}(N,d)$. Lemma \ref{lemma24} follows.
As noted above, the proposition follows from this lemma.
\end{proof}

We naturally ask the following question in the general setting.

\medskip

{\bf Question:} {\it Assume $f,g: \mathbb{B}_n \to \mathbb{B}_N$ are homotopic polynomial proper maps.
Can we always find a homotopy where each intermediate map is also a polynomial?}

\medskip 

The answer is affirmative in some special cases. If $3 \le n \le N \le 2n$, for example, then all the polynomial
maps are known and the result holds. When $n=2$ and $N=4$, there are many more polynomial maps
and the precise homotopy relations are not known.

\section{bibliography}

[BEH] M. S. Baouendi, P. F. Ebenfelt, X. Huang, Holomorphic mappings between hyperquadrics with small
signature difference, {\it  Amer. J. Math.} 133 (2011)， 1633-1661.

[CS] J. A. Cima and T. J. Suffridge, Boundary behavior of rational proper maps. {\it Duke Math. J.} 60 (1990), no. 1, 135-138.

\medskip

[D1] J. P. D'Angelo,  Several Complex Variables and the Geometry of Real Hypersurfaces,
CRC Press, Boca Raton, Fla., 1992.

\medskip
[D2] J. P.  D'Angelo, Proper holomorphic mappings,
positivity conditions, and isometric imbedding, {\it J. Korean Math Society}, May 2003, 1-30.

\medskip

[DL1] J. P. D'Angelo and J. Lebl, Homotopy equivalence for proper holomorphic mappings, {\it Adv. Math.} 286 (2016), 160-180.

\medskip

[DL2] J. P. D'Angelo and J. Lebl, On the complexity of proper mappings between balls, {\it Complex Variables and Elliptic Equations},
Volume 54, Issue 3, Holomorphic Mappings (2009), 187-204.

\medskip

[EHZ] P. F. Ebenfelt, X. Huang, D. Zaitsev, The equivalence problem and rigidity for hypersurfaces
embedded into hyperquadrics,
{\it Amer. J. Math.} 127 2005, 169-191.

\medskip

[F] F. Forstneric,
Extending proper holomorphic maps of positive codimension,
{\it Inventiones Math.}, 95(1989), 31-62.

\medskip
[H] X. Huang, On a linearity problem for proper maps between balls in complex spaces
of different dimensions, {\it J. Diff. Geometry} 51 (1999), no 1, 13-33.

\medskip
[HJ] X. Huang, X., and S. Ji, 
Mapping ${\bf B}_n$ into ${\bf B}_{2n-1}$, {\it Invent. Math.} 145 (2001), 219-250. 

\medskip
[JZ] S. Ji and Y. Zhang,
Classification of rational holomorphic maps from $B^2$ into $B^N$ with degree $2$.
{\it Sci. China Ser. A } 52 (2009), 2647-2667.

\medskip

[L] J. Lebl, Normal forms, Hermitian operators, and CR maps of spheres and hyperquadrics,
{\it  Michigan Math. J.} 60 (2011), no. 3, 603-628.

\end{document}